\renewcommand\eqref[1]{(\ref{#1})} 
\numberwithin{equation}{section}
\theoremstyle{plain}
\newtheorem{thm}{Theorem}[section]
\newtheorem{cor}[thm]{Corollary}
\newtheorem{lem}[thm]{Lemma}
\theoremstyle{definition}
\newtheorem{rem}[thm]{Remark}
\renewcommand{\wp}{\mathfrak S}
\newcommand{\Rn}{\mathbb R^{n}}
\begin{document}

   \title[Hardy and Rellich inequalities on homogeneous groups]
   {Hardy and Rellich inequalities, identities, and sharp remainders on homogeneous groups}

\author[Michael Ruzhansky]{Michael Ruzhansky}
\address{
  Michael Ruzhansky:
  \endgraf
  Department of Mathematics
  \endgraf
  Imperial College London
  \endgraf
  180 Queen's Gate, London SW7 2AZ
  \endgraf
  United Kingdom
  \endgraf
  {\it E-mail address} {\rm m.ruzhansky@imperial.ac.uk}
  }
\author[Durvudkhan Suragan]{Durvudkhan Suragan}
\address{
  Durvudkhan Suragan:
  \endgraf
  Institute of Mathematics and Mathematical Modelling
  \endgraf
  125 Pushkin str.
  \endgraf
  050010 Almaty
  \endgraf
  Kazakhstan
  \endgraf
  and
  \endgraf
  RUDN University
  \endgraf
  6 Miklukho-Maklay str., Moscow 117198
  \endgraf
  Russia
  \endgraf
  {\it E-mail address} {\rm suragan@math.kz}
  }

\thanks{The authors were supported in parts by the EPSRC
 grant EP/K039407/1 and by the Leverhulme Grant RPG-2014-02,
 as well as by the MESRK grant 5127/GF4. The second author was supported by the Ministry of Science of the Russian Federation (the Agreement number No 02.a03.21.0008).}

     \keywords{$L^{p}$-Hardy inequality, weighted Hardy inequality, Rellich inequality, homogeneous Lie group,
     uncertainty principle}
     \subjclass[2010]{22E30, 43A80}

     \begin{abstract}
     We give sharp remainder terms of $L^{p}$ and weighted Hardy and Rellich inequalities
     on one of most general subclasses
     of nilpotent Lie groups, namely the class of homogeneous groups.  As consequences,
     we obtain analogues of the generalised classical
     Hardy and Rellich inequalities and the uncertainty principle on homogeneous groups.
     We also prove higher order inequalities of Hardy-Rellich type, all with sharp constants.
     A number of identities are derived including weighted and higher order types.
     \end{abstract}
     \maketitle

\section{Introduction}

In this paper we are interested in Hardy, Rellich, and higher order inequalities of Hardy-Rellich type in the setting of general homogeneous groups. Moreover, we are interested in questions of best constants, their attainability, and sharp expressions for the remainders.

\subsection{Hardy inequalities}
In the modern analysis of the $p$-Laplacian and for other problems, the $L^{p}$-Hardy inequality takes the form
\begin{equation}\label{HRn-p}
\left\|\frac{f(x)}{|x|}\right\|_{L^{p}(\Rn)} \leq \frac{p}{n-p}\left\| \nabla f\right\|_{L^{p}(\Rn)},\quad 1\leq p<n,
\end{equation}
where $\nabla$ is the standard gradient in $\mathbb{R}^{n}$, $f\in C_{0}^{\infty}(\mathbb{R}^{n})$,
and the constant $\frac{p}{n-p}$ is known to be sharp.

The one-dimensional version of this for $p=2$ was shown by Hardy in \cite{Hardy1919}, and then for other $p$ in \cite{Hardy:1920}, with Hardy partly attributing such a generalisation to Marcel Riesz in response to Hardy's demonstration of his inequality for $p=2$
to Riesz, see \cite{Hardy:1920} for the story behind these inequalities.

Inequality \eqref{HRn-p} has been intensively analysed in different setting, with different weights and remainder analysis, see e.g. Davies and Hinz \cite{Davies-Hinz}, and Davies \cite{Davies} for a review of the inequality and its numerous applications. We also refer to a recent interesting paper of Hoffmann-Ostenhof and Laptev \cite{Laptev15} on this subject for inequalities with weights, to \cite{HHLT-Hardy-many-particles} for many-particle versions, to \cite{EKL:Hardy-p-Lap} for $p$-Laplacian interpretations, and to many further references therein.  

In the analysis of sub-Laplacian and $p$-sub-Laplacian on e.g. homogeneous Carnot groups (or stratified groups) inequalities of this type have been also intensively investigated. In this case inequality \eqref{HRn-p} takes the form
\begin{equation}\label{HG-p}
\left\|\frac{f(x)}{d(x)}\right\|_{L^{p}(\mathbb G)} \leq \frac{p}{Q-p}\left\| \nabla_{H} f\right\|_{L^{p}(\mathbb G)},\quad Q\geq 3,\; 1<p<Q,
\end{equation}
where $Q$ is the homogeneous dimension of the homogeneous Carnot group $\mathbb G$, $\nabla_{H}$ is the horizontal gradient, and $d(x)$ is the so-called $\mathcal L$-gauge. This is a particular quasi-norm obtained from the fundamental solution of the sub-Laplacian: $d(x)$ is such that $d(x)^{2-Q}$ is a constant multiple of Folland's \cite{Folland-FS} fundamental solution of the sub-Laplacian on $\mathbb G$.

In the case of the Heisenberg group \eqref{HG-p} was proved for $p=2$ by Garofalo and Lanconelli \cite{GL}, see also D'Ambrosio \cite{DAmbrosio-Hardy}, and its extension to $p\not=2$ was obtained by Niu, Zhang and Wang \cite{NZW-Hardy-p}. Further extensions appeared by
Danielli, Garofalo and Phuc \cite{DGP-Hardy-potanal} on groups of Heisenberg type, on polarisable groups by Goldstein and Kombe \cite{GolKom},
and on Carnot groups by Jin and Shen \cite{Jin-Shen:Hardy-Rellich-AM-2011} and Lian \cite{Lian:Rellich}, together with certain weighted versions.

Thus, these papers establish \eqref{HG-p} and its weighted versions on different subclasses of stratified Lie groups with methods yielding also the sharp constant in the inequality.
One can observe that the $\mathcal L$-gauge $d(x)$ can be clearly replaced by another quasi-norm due to the equivalence of all homogeneous quasi-norms on stratified Lie groups but this may change the best constant in a way which is not easy to trace. Interestingly, in the setting of the Heisenberg group it was shown by Yang \cite{Yang:PAMS-Hardy-Heisenberg} that the $\mathcal L$-gauge $d(x)$ (sometimes also called the Koranyi-Folland or Kaplan gauge in this case) can be replaced by the Carnot-Carath\'eodory distance, and the inequality  \eqref{HG-p} remains valid with the same best constant $\frac{p}{Q-p}$.
A higher order extension of \eqref{HG-p} on stratified Lie groups was recently obtained by Ciatti, Cowling and Ricci \cite{Ciatti-Cowling-Ricci} for arbitrary homogeneous quasi-norms but the obtained constants are not optimal.

To finish our very incomplete literature survey of this topic, we only mention that such questions have been also considered on manifolds, see e.g. \cite{Grillo:Hardy-Rellich-PA-2003, Adimurthi-Sekar, DAmbrosio-Dipierro:Hardy-manifolds-AHP-2014,Kombe-Ozaydin:Hardy-Rellich-mfds}. Refinements including boundary terms over arbitrary domains have been obtained by the authors in \cite{Ruzhansky-Suragan:Layers} (see also \cite{JDE2017}, \cite{RS17b} and \cite{RSY17}), and further local weighted versions for sums of squares of vector fields of possibly limited regularity on manifolds were recently established in \cite{Ruzhansky-Suragan:Hardy}.

Hardy inequalities corresponding to higher order Grushin operators and to so-called $\Delta_{\lambda}$-Laplacians have been recently obtained by
Kogoj and Sonner \cite{Kogoj-Sonner:Hardy-lambda-CVEE-2016}.
These inequalities can be viewed as ones on particular types of homogeneous groups but with the homogeneous structure more general than that of the stratified groups.

The main aim of this paper is to establish versions of the Hardy and Rellich inequalities on general homogeneous groups. In fact, we also obtain sharp remainder terms in $L^{p}$ and weighted Hardy inequalities on homogeneous groups, and these equalities immediately imply Hardy's inequalities  by observing that these remainders are non\-negative. In addition, these precise equalities imply a number of other interesting inequalities as their consequences.

Homogeneous groups are Lie groups equipped with a family of
dilations compatible with the group law.
The abelian group $(\mathbb{R}^{n}; +)$, the Heisenberg group, homogeneous Carnot groups, stratified Lie groups, graded Lie groups are all special cases of the homogeneous groups. Analysis on homogeneous groups has been consistently developed by Folland and Stein \cite{FS-Hardy} and it presents a natural setting for the distillation of the results of harmonic analysis that depend only on the underlying group and dilation structures.
We note that homogeneous groups are nilpotent, and
the class of homogeneous groups gives almost the class of all nilpotent Lie groups but is not equal to it, see Dyer \cite{Dyer-1970} for an example of a (nine-dimensional) nilpotent Lie group that does not allow for any family of dilations.

Before giving an overview of our results let us mention the critical observation that the Hardy inequality \eqref{HRn-p} can be sharpened to the inequality
\begin{equation}\label{HRn-p-sh}
\left\|\frac{f(x)}{|x|}\right\|_{L^{p}(\Rn)} \leq \frac{p}{n-p}\left\| \frac{x}{|x|}\cdot\nabla f\right\|_{L^{p}(\Rn)},\quad
 1\leq p<n.
\end{equation}
It is clear that \eqref{HRn-p-sh} implies \eqref{HRn-p} since the function $\frac{x}{|x|}$ is bounded.
The remainder terms for \eqref{HRn-p-sh} have been analysed by Ioku, Ishiwata and Ozawa
\cite{IIO:Lp-Hardy}, see also Machihara, Ozawa and Wadade \cite{MOW:Hardy-Hayashi}.

As our first result, if $\mathbb G$ is a homogeneous group and $|\cdot|$ is a homogeneous quasi-norm on $\mathbb G$, as an analogue of \eqref{HRn-p-sh}
we obtain the following generalised $L^{p}$-Hardy
inequality:
\begin{equation}\label{iLpHardyeq0}
\left\|\frac{f}{|x|}\right\|_{L^{p}(\mathbb{G})}\leq\frac{p}{Q-p}\left\|\mathcal{R} f\right\|_{L^{p}(\mathbb{G})},
\quad 1<p<Q,
\end{equation}
for all complex-valued functions $f\in C_{0}^{\infty}(\mathbb{G}\backslash\{0\}).$
Here $\nabla=(X_{1},\ldots,X_{n})$ is a gradient on $\mathbb G$ with
$\{X_{1},\ldots,X_{n}\}$ a basis of the Lie algebra
 $\mathfrak{g}$ of $\mathbb{G}$,
$A$ is a $n$-diagonal matrix
\begin{equation}\label{EQ:mA0}
A={\rm diag}(\nu_{1},\ldots,\nu_{n}),
\end{equation}
where $\nu_{k}$ is the homogeneous degree of $X_{k}$, and
$$Q = {\rm Tr}\,A=\nu_{1}+\cdots+\nu_{n}$$
is the homogeneous dimension of $\mathbb G$.
We note that the exponential mapping ${\exp}_{\mathbb{G}}:\mathfrak g\to\mathbb G$ is a global diffeomorphism and the vector $e(x)=(e_{1}(x),\ldots,e_{n}(x))$
is the decomposition of its inverse ${\exp}_{\mathbb{G}}^{-1}$ with respect to the basis
$\{X_{1},\ldots,X_{n}\}$, namely, $e(x)$ is determined by
$${\exp}_{\mathbb{G}}^{-1}(x)=e(x)\cdot \nabla\equiv\sum_{j=1}^{n}e_{j}(x)X_{j}.$$
The inequalities of different types in this paper will follow from the corresponding identities: for example, for $p=2$ and $Q\geq 3$, the $L^{2}$-Hardy inequality \eqref{iLpHardyeq0} would follow from the identity
\begin{equation}\label{EQ:expL2-0}
\left\|\mathcal{R} f\right\|^{2}_{L^{2}(\mathbb{G})}=
\left(\frac{Q-2}{2}\right)^{2}\left\|\frac{f}{|x|}\right\|^{2}_{L^{2}(\mathbb{G})}+
\left\|\mathcal{R} f+\frac{Q-2}{2}\frac{f}{|x|}\right\|^{2}_{L^{2}(\mathbb{G})}.
\end{equation}
While we refer to Section \ref{SEC:2} for further details related to homogeneous Lie groups let us make a few remarks:
\begin{itemize}
\item In the abelian case of $\mathbb G=\mathbb{R}^{n}$ the Euclidean space,
we have $Q=n$, $e(x)=x$, $A=I$, and taking $|x|$ to be the Euclidean norm, \eqref{iLpHardyeq0}
gives \eqref{HRn-p-sh}.
\item Since $\mathbb G$ is a general homogeneous group, it does not have to be stratified or even graded. Therefore, the notion of a horizontal gradient does not make sense, and hence it is natural to work with the full gradient $\nabla$.
\item However, the gradient $\nabla$ is not homogeneous unless $\mathbb G$ is abelian. On the contrary, the operator
\begin{equation}\label{EQ:def-E}
\mathcal{R} :=\frac{d}{d|x|}
\end{equation}
is homogeneous of order $-1$ thus providing a natural analogue to the usual Euclidean gradient on $\mathbb{R}^{n}$ and to the radial derivative $\frac{x}{|x|}\cdot\nabla$ appearing in
\eqref{HRn-p-sh}.
\item In fact, the operator $\mathcal{R} $ can be interpreted precisely as the radial derivative on $\mathbb G$, see \eqref{dfdr}. The operator
\begin{equation}\label{EQ:def-Euler}
{\tt Euler}:=|x| \mathcal{R} 
\end{equation}
can be interpreted as the Euler type operator characterising the homogeneity on the homogeneous group $\mathbb G$:
$$
{\tt Euler}(f)=\nu f
 \; \textrm{ if and only if }\;
 f(D_r x)=r^{\nu} f(x)\;\; (\forall r>0, x\not=0),$$
where $D_r$ is the dilation on $\mathbb G$, see Lemma \ref{L:Euler}.
\item The constant $\frac{p}{Q-p}$ in \eqref{iLpHardyeq0} is sharp and is attained if and only if $f=0$. Contrary to \eqref{HG-p} where a particular choice of $d(x)$ is made, inequality \eqref{iLpHardyeq0} with the sharp constant holds true for any homogeneous quasi-norm on $\mathbb G$.
\item For $p=n$ or $p=Q$ the inequalities \eqref{HRn-p} and \eqref{iLpHardyeq0}
(and probably also \eqref{HG-p}) fail for any constant. The critical versions of \eqref{HRn-p}
with $p=n$ were investigated by Edmunds and Triebel \cite{ET-1999}, Adimurthi and Sandeep \cite{Adimurthi-Sandeep:PRSE-2002}, and of
\eqref{HRn-p-sh} by Ioku, Ishiwata and Ozawa \cite{IIO}.
Their generalisations as well as a number of other critical (logarithmic) Hardy inequalities on homogeneous groups were obtained in our recent preprint \cite{Ruzhansky-Suragan:critical}.
Here we only mention the range of logarithmic Hardy inequalities
\begin{equation}\label{LH2p}
\qquad  \underset{R>0}{\sup}\left\|\frac{f-f_{R}}{|x|^{\frac{Q}{p}}{\log}\frac{R}{|x|}}
\right\|_{L^{p}(\mathbb{G})}\leq
\frac{p}{p-1}\left\| \frac{1}{|x|^{\frac{Q}{p}-1}}\mathcal{R} f\right\|_{L^{p}(\mathbb{G})},
\end{equation}
for all $1< p<\infty$, where $f_{R}=f(R\frac{x}{|x|})$. We refer to
\cite{Ruzhansky-Suragan:critical} for further explanations and extensions but only mention here that for $p=Q$ the inequality \eqref{LH2p} gives the critical case of Hardy's inequalities \eqref{iLpHardyeq0}.
\end{itemize}

\subsection{Rellich inequalities}
At the same time the Rellich inequalities give results of the type
$$
\int_{\Rn}\frac{|f|^{p}}{|x|^{\alpha}} dx \leq C \int_{\Rn} \frac{|\Delta f|^{p}}{|x|^{\beta}} dx
$$
for certain relations between $\alpha,\beta, n,p$.
For example, the classical result by Rellich appearing at the 1954 ICM in Amsterdam
\cite{Rellich:ineq-1956}
stated the inequality
\begin{equation}\label{EQ:Rellich1}
\left\|\frac{f}{|x|^{2}}\right\|_{L^{2}(\Rn)}\leq \frac{4}{n(n-4)}\|\Delta f\|_{L^{2}(\Rn)},\quad n\geq 5.
\end{equation}
We refer e.g. to Davies and Hinz
\cite{Davies-Hinz} for history and further extensions,
including the derivation of sharp constants, and to
\cite{Kombe:Rellich-Carnot-2010} and \cite{Lian:Rellich} for the corresponding results for the
sub-Laplacian on homogeneous Carnot groups.

To establish an analogue of \eqref{EQ:Rellich1} in the setting of homogeneous groups is an interesting question. The first obstacle to it is that since the group does not have to be stratified and not even graded, there may be no homogeneous left-invariant hypoelliptic differential operators on $\mathbb G$ at all, to enter the right hand side of \eqref{EQ:Rellich1}.

However, similar to the discussion of the Hardy inequality before, the inequality \eqref{EQ:Rellich1} can be expressed in terms of the radial derivative $\partial_{r}=\frac{x}{|x|}\cdot\nabla$ in the form
\begin{equation}\label{EQ:Rellich2}
\left\|\frac{f}{|x|^{2}}\right\|_{L^{2}(\Rn)}\leq \frac{4}{n(n-4)}\left\|\partial_{r}^{2} f+\frac{n-1}{|x|}\partial_{r}f\right\|_{L^{2}(\Rn)},\quad n\geq 5.
\end{equation}
Similarly to our Hardy inequality \eqref{iLpHardyeq0}, we show the analogue of this for general homogeneous groups using the operator $\mathcal{R} $ from \eqref{EQ:def-E}:
\begin{equation}\label{iwHardyeq-R}
\left\|\frac{f}{|x|^{2}}
\right\|_{L^{2}(\mathbb{G})}\leq\frac{4}{Q(Q-4)}\left\|\mathcal{R} ^{2} f+\frac{Q-1}{|x|}\mathcal{R}  f
\right\|_{L^{2}(\mathbb{G})},\quad Q\geq 5,
\end{equation}
for all complex-valued functions $f\in C_{0}^{\infty}(\mathbb{G}\backslash\{0\}).$

As discussed for the Hardy inequality, the expression on the right hand side of \eqref{iwHardyeq-R} appears to be natural since there is no analogue of homogeneous Laplacian or sub-Laplacian on general homogeneous groups to formulate a version similar to \eqref{EQ:Rellich1}. In fact, there may be no homogeneous hypoelliptic left-invariant differential operators at all: the existence of such an operator would imply that the group must be graded as was shown by Miller \cite{Miller:graded} with a small gap in the proof later corrected by ter Elst and Robinson \cite{ter-Elst-Robinson:Rockland}, see \cite[Proposition 4.1.3]{FR} for a simple proof.

Again, we show that for every homogeneous quasi-norm the constant $\frac{4}{Q(Q-4)}$ in \eqref{iwHardyeq-R} is sharp and the equality is reached only for $f=0$.

\subsection{Higher order Hardy-Rellich inequalities}

In \cite{Davies-Hinz}, Davies and Hinz established a number of higher order Rellich inequalities of the form
$$
\int_{\Rn}\frac{|f|^{p}}{|x|^{\alpha}} dx \leq C \int_{\Rn} \frac{|\Delta^m f|^{p}}{|x|^{\beta}} dx
$$
for certain relations between $\alpha,\beta, n, m, p$, for all $f\in C^{\infty}_{0}(\Rn\backslash \{0\})$. By combining this inequality with the Hardy inequality one obtains the inequality with $\nabla \Delta^{m}f$ on the right hand side. They have been obtained by iterating Rellich's inequality and, most surprisingly, this method yielded the sharp constants as well. Their method, however, does not seem to be readily extendible to general homogeneous groups.

In this paper we adopt an approach different from that of Davies and Hinz \cite{Davies-Hinz}. Namely, for $p=2$, we can iterate the exact representation formulae for the remainder that we obtained for the Rellich inequality and for the weighted Hardy inequalities. This yields higher order remainders that can be then also used to argue the sharpness of the constant.
Thus, in analogy to the operators appearing so far,
for $Q\geq 3$ and $\prod_{j=0}^{k-1}
\left|\frac{Q-2}{2}-(\alpha+j)\right|\neq0$,
we prove the inequality
\begin{equation}\label{awHardyeq-high-R}
\left\|\frac{f}{|x|^{k+\alpha}}
\right\|_{L^{2}(\mathbb{G})}\leq
\left[\prod_{j=0}^{k-1}
\left|\frac{Q-2}{2}-(\alpha+j)\right|\right]^{-1}\left\|\frac{1}{|x|^{\alpha}}\mathcal{R} ^{k}f\right\|_{L^{2}(\mathbb{G})},
\end{equation}
where the appearing constant is sharp and is attained if and only if $f=0$.

For $k=1$, this gives weighted Hardy's inequality \eqref{iLpHardyeq0} (in addition, it gives Hardy's inequality for $\alpha=0$).
For $k=2$ this can be thought of as a Hardy-Rellich type inequality, while for larger $k$ this corresponds to higher order Rellich inequalities.

\subsection{Review of results}
In the case of $\Rn$ expressions for the remainder terms in Hardy and Rellich inequalities have been recently analysed in \cite{IIO:Lp-Hardy, MOW:Hardy-Hayashi,MOW-Rellich}.
In this paper we obtain representation formulae by making use of the operator $\mathcal{R} $ described above, on general homogeneous groups. Moreover, in Section \ref{Sec4} and Section \ref{SEC:ho} we also obtain weighted representation formulae and Hardy-type inequalities which we believe to be new already in the Euclidean setting of $\Rn$.

Thus, in this paper we show that for a homogeneous group $\mathbb{G}$ of homogeneous dimension $Q\geq 2$ and any homogeneous quasi-norm $|\cdot|$ we have the following equalities and inequalities providing analogues of Hardy and Rellich inequalities on homogeneous groups:

\begin{itemize}
\item
Let $f\in C_{0}^{\infty}(\mathbb{G}\backslash\{0\})$ be a real-valued function and let $1<p<Q$.
Then
\begin{multline}\label{iLH2}
\qquad
\left\|\frac{p}{Q-p}\mathcal{R} f\right\|^{p}_{L^{p}(\mathbb{G})}
-\left\|\frac{f}{|x|}\right\|^{p}_{L^{p}(\mathbb{G})} \\
=
p\int_{\mathbb{G}}I_{p}\left(\frac{f}{|x|},-\frac{p}{Q-p}\mathcal{R} f\right)\left|\frac{f}{|x|}+\frac{p}{Q-p}\mathcal{R} f\right|^{2}dx,
\end{multline}
where $\mathcal{R}$
is defined in \eqref{EQ:def-E}
and $I_{p}$ is given by
$$
I_{p}(h,g)=(p-1)\int_{0}^{1}|\xi h+(1-\xi)g|^{p-2}\xi d\xi.
$$
\item
Since $I_{p}\geq 0$ is nonnegative, the identity \eqref{iLH2} implies the generalised $L^{p}$-Hardy
inequality
\begin{equation}\label{iLpHardyeq}
\left\|\frac{f}{|x|}\right\|_{L^{p}(\mathbb{G})}\leq\frac{p}{Q-p}\left\|\mathcal{R} f\right\|_{L^{p}(\mathbb{G})},
\quad 1<p<Q,
\end{equation}
for all real-valued function $f\in C_{0}^{\infty}(\mathbb{G}\backslash\{0\}).$
From this we can pass to complex-valued functions by a routine argument but we will also give an independent proof of \eqref{iLpHardyeq}.
\item
As a consequence of \eqref{iLpHardyeq} we obtain the following generalised uncertainty principle:
\begin{equation}\label{iUP1}
\quad \left(\int_{\mathbb{G}}\left|\mathcal{R} f\right|^{p}dx\right)^{\frac{1}{p}}
 \left(\int_{\mathbb{G}}|x|^{q}
|f|^{q}dx\right)^{\frac{1}{q}}
\geq\frac{Q-p}{p}\int_{\mathbb{G}}
|f|^{2}dx,\quad \frac{1}{p}+\frac{1}{q}=1,
\end{equation}
for all complex-valued functions $f\in C^{\infty}_{0}(\mathbb{G})$.
\item
For every complex-valued function $f\in C^{\infty}_{0}(\mathbb{G}\backslash\{0\})$ and $\alpha\in\mathbb{R}$
we have the following weighted identity:
\begin{multline}\label{iawH}
\qquad \left\|\frac{1}{|x|^{\alpha}}\mathcal{R} f\right\|^{2}_{L^{2}(\mathbb{G})}=
\left(\frac{Q-2}{2}-\alpha\right)^{2}\left\|\frac{f}{|x|^{\alpha+1}}
\right\|^{2}_{L^{2}(\mathbb{G})}
+\left\|\frac{1}{|x|^{\alpha}}\mathcal{R} f+\frac{Q-2-2\alpha}{2|x|^{\alpha+1}}f
\right\|^{2}_{L^{2}(\mathbb{G})}.
\end{multline}
\item
Identity \eqref{iawH} implies different estimates. For example, for
$\alpha=1$ it gives the following generalised weighted Hardy
inequality:
\begin{equation}\label{awHardyeq}
\left\|\frac{f}{|x|^{2}}
\right\|_{L^{2}(\mathbb{G})}\leq
\frac{2}{Q-4}\left\|\frac{1}{|x|}\mathcal{R} f\right\|_{L^{2}(\mathbb{G})},\quad Q\geq 5,
\end{equation}
for all complex-valued $f\in C_{0}^{\infty}(\mathbb{G}\backslash\{0\})$.
Similarly, for any $\alpha\in\mathbb{R}$ such that $Q-2\alpha-2\neq0$
we have
\begin{equation}\label{awHardyeq-g0}
\qquad \left\|\frac{f}{|x|^{\alpha+1}}\right\|_{L^{2}(\mathbb{G})}\leq
\frac{2}{|Q-2-2\alpha|}
\left\|\frac{1}{|x|^{\alpha}}\mathcal{R} f\right\|_{L^{2}(\mathbb{G})},
\end{equation}
where the constant in the right hand side is sharp, see
Corollary \ref{waHardy}.

\item
Let $Q\geq 5$.
Then for every complex-valued function $f\in C^{\infty}_{0}(\mathbb{G})$
we have the identity
\begin{multline}\label{iwH}
\qquad \left\|\mathcal{R} ^{2} f+\frac{Q-1}{|x|}\mathcal{R}  f
+\frac{Q(Q-4)}{4|x|^{2}}f\right\|^{2}_{L^{2}(\mathbb{G})}
+\frac{Q(Q-4)}{2}\left\|\frac{1}{|x|}\mathcal{R}  f
+\frac{Q-4}{2|x|^{2}}f\right\|^{2}_{L^{2}(\mathbb{G})}\\
=\left\|\mathcal{R} ^{2} f+\frac{Q-1}{|x|}\mathcal{R}  f
\right\|^{2}_{L^{2}(\mathbb{G})}-\left(\frac{Q(Q-4)}{4}\right)^{2}\left\|\frac{f}{|x|^{2}}
\right\|^{2}_{L^{2}(\mathbb{G})}.
\end{multline}
\item
Simply by dropping two positive terms, \eqref{iwH} implies the Rellich inequality
\begin{equation}\label{iwHardyeq}
\left\|\frac{f}{|x|^{2}}
\right\|_{L^{2}(\mathbb{G})}\leq\frac{4}{Q(Q-4)}\left\|\mathcal{R} ^{2} f+\frac{Q-1}{|x|}\mathcal{R}  f
\right\|_{L^{2}(\mathbb{G})}, \quad Q\geq 5,
\end{equation}
for all complex-valued functions $f\in C_{0}^{\infty}(\mathbb{G}\backslash\{0\}).$

\item For $Q\geq 3$, $\alpha\in\mathbb{R}$ and $k\in\mathbb N$ such that $\prod_{j=0}^{k-1}
\left|\frac{Q-2}{2}-(\alpha+j)\right|\neq0$ we have
the higher order Hardy-Rellich type inequalities:
\begin{equation}\label{awHardyeq-high-R2}
\left\|\frac{f}{|x|^{k+\alpha}}
\right\|_{L^{2}(\mathbb{G})}\leq
\left[\prod_{j=0}^{k-1}
\left|\frac{Q-2}{2}-(\alpha+j)\right|\right]^{-1}\left\|\frac{1}{|x|^{\alpha}}\mathcal{R} ^{k}f\right\|_{L^{2}(\mathbb{G})},
\end{equation}
for all complex-valued functions $f\in C_{0}^{\infty}(\mathbb{G}\backslash\{0\})$,
where the appearing constant is sharp and is attained if and only if $f=0$.

It is interesting to note that in particular, taking $\alpha=0$, if $Q$ is even, inequality \eqref{awHardyeq-high-R2} becomes
\begin{equation}\label{awHardyeq-high-R2-nw}
\left\|\frac{f}{|x|^{k}}
\right\|_{L^{2}(\mathbb{G})}\leq
\left[\prod_{j=0}^{k-1}
\left(\frac{Q-2}{2}-j\right)\right]^{-1}\left\|\mathcal{R} ^{k}f\right\|_{L^{2}(\mathbb{G})},
\quad \forall\; k<\frac{Q}{2},\; Q \textrm{ even}.
\end{equation}
If $Q$ is odd, since $\frac{Q}{2}$ is not an integer, there is no restriction $k$, so that we have
\begin{equation}\label{awHardyeq-high-R2-nw2}
\left\|\frac{f}{|x|^{k}}
\right\|_{L^{2}(\mathbb{G})}\leq
\left[\prod_{j=0}^{k-1}
\left|\frac{Q-2}{2}-j\right|\right]^{-1}\left\|\mathcal{R} ^{k}f\right\|_{L^{2}(\mathbb{G})}.
\quad \forall\; k\in\mathbb N,\; Q \textrm{ odd},
\end{equation}
The constants in \eqref{awHardyeq-high-R2-nw} and \eqref{awHardyeq-high-R2-nw2} are sharp.

\item Moreover, 
we derive the exact formula for the remainder, i.e.
for the difference between the right and left hand sides in \eqref{awHardyeq-high-R2}, for any
$k\in\mathbb N$ and $\alpha\in\mathbb{R}$,
\begin{multline}\label{equality-high-rem0}
\qquad \left\|\frac{1}{|x|^{\alpha}}\mathcal{R} ^{k}f\right\|^{2}_{L^{2}(\mathbb{G})}-
\left[\prod_{j=0}^{k-1}
\left(\frac{Q-2}{2}-(\alpha+j)\right)^{2}\right]\left\|\frac{f}{|x|^{k+\alpha}}
\right\|^{2}_{L^{2}(\mathbb{G})}
\\=\sum_{l=1}^{k-1}\left[\prod_{j=0}^{l-1}
\left(\frac{Q-2}{2}-(\alpha+j)\right)^{2}\right]\left\|\frac{1}{|x|^{l+\alpha}}\mathcal{R} ^{k-l}f+
\frac{Q-2(l+1+\alpha)}{2|x|^{l+1+\alpha}}\mathcal{R} ^{k-l-1}f\right\|^{2}_{L^{2}(\mathbb{G})}
\\
+\left\|\frac{1}{|x|^{\alpha}}\mathcal{R} ^{k}f+\frac{Q-2-2\alpha}{2|x|^{1+\alpha}}\mathcal{R} ^{k-1}f \right\|^{2}_{L^{2}(\mathbb{G})}.
\end{multline}
\end{itemize}

In Section \ref{SEC:2} we very briefly review the main concepts of homogeneous
groups and fix the notation. In Section \ref{Sec3} we give sharp remainder terms of $L^{p}$-Hardy inequality on homogeneous groups and, as consequences, we obtain analogues of the classical
Hardy inequality as well as uncertainty principle on homogeneous groups. In Section \ref{Sec4}  and Section \ref{Sec5} sharp remainder terms of weighted Hardy and Rellich inequalities on homogeneous groups are studied, respectively. In Section \ref{SEC:ho} we present higher order Hardy-Rellich type inequalities.

\section{Preliminaries}
\label{SEC:2}

Here we very briefly recall some basics of the analysis on homogeneous groups and establish some properties of the operator $\mathcal{R} $ from \eqref{EQ:def-E}.
For the general background details on homogeneous groups we refer to the book
\cite{FS-Hardy} by Folland and Stein as well as to the recent monograph \cite{FR} by V. Fischer and the first named author.

We recall that a family of dilations of a Lie algebra $\mathfrak{g}$
is a family of linear mappings of the form
$$D_{\lambda}={\rm Exp}(A \,{\rm ln}\lambda)=\sum_{k=0}^{\infty}
\frac{1}{k!}({\rm ln}(\lambda) A)^{k},$$
where $A$ is a diagonalisable linear operator on $\mathfrak{g}$
with positive eigenvalues,
and each $D_{\lambda}$ is a morphism of the Lie algebra $\mathfrak{g}$,
that is, a linear mapping
from $\mathfrak{g}$ to itself which respects the Lie bracket:
$$\forall X,Y\in \mathfrak{g},\, \lambda>0,\;
[D_{\lambda}X, D_{\lambda}Y]=D_{\lambda}[X,Y].$$
A {\em homogeneous group} is a connected simply connected Lie group whose
Lie algebra is equipped with dilations.
Homogeneous groups are necessarily nilpotent and hence, in particular, the exponential mapping $\exp_{\mathbb G}:\mathfrak g\to\mathbb G$ is a global diffeomorphism.
It induces the dilation structure on $\mathbb G$ which we continue to denote by $D_{\lambda}x$ or simply by $\lambda x$. We denote by
$$Q := {\rm Tr}\,A$$
the homogeneous dimension of $\mathbb G$.

Let $dx$ denote the Haar measure on $\mathbb{G}$ and let $|S|$ denote the corresponding volume of a measurable set $S\subset \mathbb{G}$.
Then we have
\begin{equation}
|D_{\lambda}(S)|=\lambda^{Q}|S| \quad {\rm and}\quad \int_{\mathbb{G}}f(\lambda x)
dx=\lambda^{-Q}\int_{\mathbb{G}}f(x)dx.
\end{equation}

We now fix a basis $\{X_{1},\ldots,X_{n}\}$ of $\mathfrak{g}$
such that
$$AX_{k}=\nu_{k}X_{k}$$
for each $k$, so that $A$ can be taken to be
$A={\rm diag} (\nu_{1},\ldots,\nu_{n}).$
Then each $X_{k}$ is homogeneous of degree $\nu_{k}$ and also
$$
Q=\nu_{1}+\cdots+\nu_{n}.
$$
The decomposition of ${\exp}_{\mathbb{G}}^{-1}(x)$ in the Lie algebra $\mathfrak g$ defines the vector
$$e(x)=(e_{1}(x),\ldots,e_{n}(x))$$
by the formula
$${\exp}_{\mathbb{G}}^{-1}(x)=e(x)\cdot \nabla\equiv\sum_{j=1}^{n}e_{j}(x)X_{j},$$
where $\nabla=(X_{1},\ldots,X_{n})$.
Alternatively, this means the equality
$$x={\exp}_{\mathbb{G}}\left(e_{1}(x)X_{1}+\ldots+e_{n}(x)X_{n}\right).$$
By homogeneity this implies
$$rx={\exp}_{\mathbb{G}}\left(r^{\nu_{1}}e_{1}(x)X_{1}+\ldots
+r^{\nu_{n}}e_{n}(x)X_{n}\right),$$
that is,
$$
e(rx)=(r^{\nu_{1}}e_{1}(x),\ldots,r^{\nu_{n}}e_{n}(x)).
$$
Consequently, we can calculate
\begin{align*}
\frac{d}{dr}f(rx) & =  \frac{d}{dr}f({\exp}_{\mathbb{G}}
\left(r^{\nu_{1}}e_{1}(x)X_{1}+\ldots
+r^{\nu_{n}}e_{n}(x)X_{n}\right)) \\
& =  \left[(\nu_{1}r^{\nu_{1}-1}e_{1}(x)X_{1}+\ldots
+\nu_{n}r^{\nu_{n}-1}e_{n}(x)X_{n})f\right](rx).
\end{align*}
this yields the equality
\begin{equation}\label{dfdr}
\frac{d}{dr}f(rx)=\mathcal{R} f(rx).
\end{equation}
In other words, the operator $\mathcal{R} $ plays the role of the radial derivative on $\mathbb G$.
It follows from \eqref{EQ:def-Euler} that $\mathcal{R} $ is homogeneous of order $-1$.

The following relation between $\mathcal{R} $ and Euler's operator in \eqref{EQ:def-Euler}
will be of importance to us.
\begin{lem}\label{L:Euler}
Define the operator
\begin{equation}\label{EQ:def-Euler}
{\tt Euler}:=|x| \mathcal{R}.
\end{equation}
If $f:\mathbb G\backslash \{0\}\to\mathbb R$ is continuously differentiable, then
$$
{\tt Euler}(f)=\nu f
 \; \textrm{ if and only if }\;
 f(D_{r} x)=r^{\nu} f(x)\;\; (\forall r>0, x\not=0).$$
\end{lem}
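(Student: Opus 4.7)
The plan is to reduce the equivalence to a one-dimensional ODE along the dilation orbit $r\mapsto D_{r}x$. The operator $\mathcal{R}=d/d|x|$ is the radial derivative in the polar decomposition $x=D_{|x|}\omega$ of $\mathbb{G}$ (with $\omega$ on the quasi-sphere $\{|y|=1\}$), and dilations act only on the radial coordinate, leaving $\omega$ fixed.

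The first step is to derive the chain-rule identity
$$\frac{d}{dr}f(D_{r}x)=|x|\,(\mathcal{R} f)(D_{r}x) \qquad (\ast)$$
for every $x\neq 0$ and $r>0$. This follows from the calculation preceding \eqref{dfdr}: combining $\frac{d}{dr}f(rx)=\bigl[\sum_{k}\nu_{k}r^{\nu_{k}-1}e_{k}(x)X_{k}\bigr]f(rx)$ with the homogeneity $e_{k}(D_{r}x)=r^{\nu_{k}}e_{k}(x)$ and the observation that $W:=\sum_{k}\nu_{k}e_{k}(\cdot)X_{k}$ acts as the Euler operator $|\cdot|\,\mathcal{R}$, one extracts a factor of $|x|$ from the right-hand side and arrives at $(\ast)$.

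For the direction ``$\Leftarrow$'', assume $f(D_{r}x)=r^{\nu}f(x)$. Differentiating in $r$ and evaluating at $r=1$ gives $\nu f(x)$ on one side and, by $(\ast)$, $|x|\mathcal{R} f(x)$ on the other, so ${\tt Euler}(f)=\nu f$. For ``$\Rightarrow$'', assume ${\tt Euler}(f)=\nu f$, fix $x\neq 0$, and set $g(r):=f(D_{r}x)$. Using $(\ast)$ and the hypothesis at the point $D_{r}x$, whose quasi-norm is $r|x|$, one computes
$$g'(r)=|x|\,(\mathcal{R} f)(D_{r}x)=|x|\cdot\frac{\nu f(D_{r}x)}{r|x|}=\frac{\nu}{r}\,g(r).$$
Integrating this elementary ODE with the initial value $g(1)=f(x)$ delivers $g(r)=r^{\nu}f(x)$, which is the desired homogeneity.

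The main obstacle is the identity $(\ast)$: although it is essentially contained in the derivation of \eqref{dfdr}, it depends on a careful interpretation of $\mathcal{R}$ as the genuine radial derivative in the polar coordinates of $\mathbb{G}$ and on the fact that dilations preserve the angular coordinate $\omega$. With $(\ast)$ in hand, both implications are routine; the continuous differentiability of $f$ is used only to justify the ODE argument on the ray through $x$.
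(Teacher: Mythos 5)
Your proof is correct and follows essentially the same route as the paper: both directions are reduced, via the chain rule along the dilation orbit obtained from the exponential-coordinate computation preceding \eqref{dfdr}, to the elementary ODE $g'(r)=\frac{\nu}{r}g(r)$ for $g(r)=f(D_{r}x)$. Your identity $(\ast)$ is just the paper's relation $\frac{d}{dr}f(rx)=\frac{1}{r}{\tt Euler}(f)(rx)$ with the factor $|x|$ made explicit, which is in fact a slightly more careful rendering of \eqref{dfdr} than the form stated there.
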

\begin{proof}
If $f$ is positively homogeneous of order $\nu$, i.e. if $f(r x)=r^{\nu}f(x)$ holds for all $r>0$ and $x\not=0$, then applying \eqref{dfdr} to such $f$ and setting $r=1$ we get
$$
{\tt Euler}(f)=\nu f.
$$
Conversely, let us fix $x\not=0$ and define $g(r):=f(rx)$.
Using \eqref{dfdr}, the equality ${\tt Euler}(f)(rx)=\nu f(rx)$ means that
$$
g'(r)=\frac{d}{dr}f(rx)=\frac{1}{r} {\tt Euler}(f)(rx)=\frac{\nu}{r} f(rx)=\frac{\nu}{r}g(r).
$$
Consequently, $g(r)=g(1) r^{\nu}$, i.e. $f(rx)=r^{\nu} f(x)$ and thus $f$ is positively homogeneous of order $r$.
\end{proof}

A {\em homogeneous quasi-norm} on a homogeneous group $\mathbb G$ is
a continuous non-negative function
$$\mathbb{G}\ni x\mapsto |x|\in [0,\infty),$$
satisfying the properties

\begin{itemize}
\item   $|x^{-1}| = |x|$ for all $x\in \mathbb{G}$,
\item  $|\lambda x|=\lambda |x|$ for all
$x\in \mathbb{G}$ and $\lambda >0$,
\item  $|x|= 0$ if and only if $x=0$.
\end{itemize}

The following polar decomposition will be useful for our analysis:
there is a (unique)
positive Borel measure $\sigma$ on the
unit sphere
\begin{equation}\label{EQ:sphere}
\wp:=\{x\in \mathbb{G}:\,|x|=1\},
\end{equation}
such that for all $f\in L^{1}(\mathbb{G})$ we have
\begin{equation}\label{EQ:polar}
\int_{\mathbb{G}}f(x)dx=\int_{0}^{\infty}
\int_{\wp}f(ry)r^{Q-1}d\sigma(y)dr.
\end{equation}
We refer to Folland and Stein \cite{FS-Hardy} for the proof, which can be also found in
\cite[Section 3.1.7]{FR}.

\section{$L^{p}$-Hardy inequality and uncertainty principle}
\label{Sec3}

In this section and in the sequel we adopt all the notation introduced in Section \ref{SEC:2} concerning homogeneous groups and the operator $\mathcal{R} $.

We now present the $L^{p}$-Hardy inequality and the remainder formula on the homogeneous group $\mathbb{G}$.

\begin{thm}\label{RemHardy}
Let $\mathbb{G}$ be a homogeneous group
of homogeneous dimension $Q$. Let $|\cdot|$ be a homogeneous quasi-norm on $\mathbb{G}$.
Let $1<p<Q$.

\begin{itemize}
\item[(i)] Let $f\in C_{0}^{\infty}(\mathbb{G}\backslash\{0\})$ be a complex-valued function.
Then
\begin{equation}\label{LpHardyinC}
\left\|\frac{f}{|x|}\right\|_{L^{p}(\mathbb{G})}\leq\frac{p}{Q-p}\left\|\mathcal{R} f\right\|_{L^{p}(\mathbb{G})},\quad1<p<Q.
\end{equation}
The constant $\frac{p}{Q-p}$ is sharp. Moreover, the equality in \eqref{LpHardyinC} is attained if and only if $f=0$.

\item[(ii)] Let $f\in C_{0}^{\infty}(\mathbb{G}\backslash\{0\})$ be a real-valued function.
Setting
$$u:=u(x)=-\frac{p}{Q-p}\mathcal{R} f(x)$$
and
$$v:=v(x)=\frac{f(x)}{|x|},$$
we have the identity
\begin{equation}\label{LH2}
\qquad
\left\|u\right\|^{p}_{L^{p}(\mathbb{G})}
-\left\|v\right\|^{p}_{L^{p}(\mathbb{G})}=p\int_{\mathbb{G}}I_{p}(v,u)|v-u|^{2}dx,
\end{equation}
where
$$
I_{p}(h,g)=(p-1)\int_{0}^{1}|\xi h+(1-\xi)g|^{p-2}\xi d\xi.
$$
\item[(iii)]
In the case $p=2$, the identity \eqref{LH2} holds for complex-valued functions as well.
Namely, if $f\in C_{0}^{\infty}(\mathbb{G}\backslash\{0\})$ is a complex-valued function then
we have
\begin{equation}\label{EQ:expL2}
\left\|\mathcal{R} f\right\|^{2}_{L^{2}(\mathbb{G})}=
\left(\frac{Q-2}{2}\right)^{2}\left\|\frac{f}{|x|}\right\|^{2}_{L^{2}(\mathbb{G})}+
\left\|\mathcal{R} f+\frac{Q-2}{2}\frac{f}{|x|}\right\|^{2}_{L^{2}(\mathbb{G})},\quad Q\geq 3.
\end{equation}
\end{itemize}
\end{thm}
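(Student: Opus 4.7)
The strategy is to reduce all three statements to one-dimensional integrals along radial rays via the polar decomposition \eqref{EQ:polar}, namely $\int_{\mathbb{G}}F\,dx=\int_{0}^{\infty}\int_{\wp}F(ry)\,r^{Q-1}\,d\sigma(y)\,dr$, combined with the radial identity $\mathcal{R}f(ry)=\frac{d}{dr}f(ry)$ from \eqref{dfdr}. For each fixed $y\in\wp$ the function $g(r):=f(ry)$ lies in $C_{0}^{\infty}(0,\infty)$, which kills every boundary contribution under integration by parts; the assumption $p<Q$ supplies the power of $r$ needed for integrability at the origin.

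For part (i), I would establish the inequality first for real-valued $f$ and then pass to complex-valued $f$ either by a standard real-imaginary splitting or, more cleanly, through the $p=2$ route used in (iii). The radial estimate is the one-dimensional weighted Hardy inequality: writing $\int_{0}^{\infty}|g|^{p}r^{Q-p-1}\,dr=\frac{1}{Q-p}\int_{0}^{\infty}|g|^{p}(r^{Q-p})'\,dr$ and integrating by parts yields $-\frac{p}{Q-p}\int_{0}^{\infty}|g|^{p-2}g\,g'\,r^{Q-p}\,dr$, to which H\"older's inequality with exponents $p/(p-1)$ and $p$ applied to the factor splitting $|g|^{p-1}r^{(Q-p-1)(p-1)/p}\cdot|g'|r^{(Q-1)/p}$ gives $\bigl(\int_{0}^{\infty}|g|^{p}r^{Q-p-1}\,dr\bigr)^{1/p}\le\frac{p}{Q-p}\bigl(\int_{0}^{\infty}|g'|^{p}r^{Q-1}\,dr\bigr)^{1/p}$. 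Raising to the $p$-th power, integrating over $d\sigma(y)$, and inverting the polar formula produces \eqref{LpHardyinC}. Sharpness of the constant is obtained by testing on approximate extremisers of the form $f_{\varepsilon}(x)=\phi(|x|)|x|^{-(Q-p)/p+\varepsilon}$, with $\phi$ a smooth cut-off, and letting $\varepsilon\to 0^{+}$; the rigidity claim follows from the equality case of H\"older, which would force $|g'|/|g|$ to equal a fixed multiple of $1/r$ on each ray, incompatible with compact support away from the origin unless $g\equiv 0$.

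For part (ii), I would apply Taylor's theorem with integral remainder to the $C^{2}$ real function $F(t)=|t|^{p}$, which has $F''(t)=p(p-1)|t|^{p-2}$. This yields the pointwise identity $|u|^{p}-|v|^{p}=p|v|^{p-2}v(u-v)+p\,I_{p}(v,u)(u-v)^{2}$ (with the definition of $I_{p}$ emerging after the substitution $\xi=1-\tau$ in the Taylor remainder). Integrating over $\mathbb{G}$, identity \eqref{LH2} reduces to the orthogonality relation $\int_{\mathbb{G}}|v|^{p-2}v(u-v)\,dx=0$ which, after substitution of $u$ and $v$, is nothing but the integration-by-parts identity $\int_{\mathbb{G}}\frac{|f|^{p}}{|x|^{p}}\,dx=-\frac{p}{Q-p}\int_{\mathbb{G}}\frac{|f|^{p-2}f\,\mathcal{R}f}{|x|^{p-1}}\,dx$ already established while proving part (i).

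For part (iii), since the Taylor argument of (ii) becomes delicate for complex $f$, I would expand the square directly: $\bigl\|\mathcal{R}f+\tfrac{Q-2}{2}\tfrac{f}{|x|}\bigr\|_{L^{2}}^{2}=\|\mathcal{R}f\|_{L^{2}}^{2}+(Q-2)\operatorname{Re}\!\int_{\mathbb{G}}\frac{\bar f\,\mathcal{R}f}{|x|}\,dx+\bigl(\tfrac{Q-2}{2}\bigr)^{2}\bigl\|\tfrac{f}{|x|}\bigr\|_{L^{2}}^{2}$. Identity \eqref{EQ:expL2} then reduces to $\operatorname{Re}\int_{\mathbb{G}}\frac{\bar f\,\mathcal{R}f}{|x|}\,dx=-\frac{Q-2}{2}\int_{\mathbb{G}}\frac{|f|^{2}}{|x|^{2}}\,dx$, and since $2\operatorname{Re}(\bar f\,\mathcal{R}f)=\mathcal{R}|f|^{2}$, this follows by plugging $h(r):=|f(ry)|^{2}$ into polar coordinates and integrating by parts in $r$, precisely as in the proof of (i) specialised to $p=2$ (here $Q\ge 3$ ensures that the boundary term $r^{Q-2}h(r)$ vanishes at the origin). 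The main technical obstacle is the combined sharpness-with-rigidity claim in (i): tracing the H\"older equality condition carefully and ruling out the resulting power-type profiles on $C_{0}^{\infty}(\mathbb{G}\setminus\{0\})$; by contrast, the identities (ii) and (iii) are essentially algebraic once the basic integration-by-parts formula is in hand.
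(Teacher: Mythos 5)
Your plan follows the paper's own route almost step for step: part (i) is the polar decomposition \eqref{EQ:polar} plus integration by parts in $r$ plus H\"older; part (ii) is the pointwise convexity identity $|u|^{p}-|v|^{p}=p|v|^{p-2}v(u-v)+pI_{p}(v,u)(u-v)^{2}$ (the paper states it as $|u|^{p}+(p-1)|v|^{p}-p|v|^{p-2}vu=pI_{p}(v,u)|v-u|^{2}$, which is the same thing) combined with the orthogonality relation $\|v\|_{p}^{p}=\int|v|^{p-2}vu\,dx$ coming from the same integration by parts; part (iii) is completing the square against $\mathrm{Re}\int\frac{\bar f\,\mathcal{R}f}{|x|}\,dx=-\frac{Q-2}{2}\|f/|x|\|_{2}^{2}$; and the rigidity/sharpness discussion via homogeneity of degree $-(Q-p)/p$ versus compact support matches the paper's Remark \ref{LpHardy}. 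A cosmetic difference is that you run H\"older ray by ray and then integrate over $\wp$, while the paper applies it globally on $\mathbb{G}$; both are fine. (Minor: for $1<p<2$ the function $|t|^{p}$ is not $C^{2}$ at $0$, but $F'$ is absolutely continuous and $|t|^{p-2}$ is locally integrable, so the Taylor remainder formula survives.)

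The one point you should repair is the passage from real to complex $f$ in part (i). A "standard real--imaginary splitting" via the triangle inequality costs a factor of $2$ and destroys the sharp constant $\frac{p}{Q-p}$, and "the $p=2$ route used in (iii)" is unavailable for $p\neq 2$. The paper handles this either by the averaging identity of Davies,
\begin{equation*}
|z|^{p}=\left(\int_{-\pi}^{\pi}|\cos\theta|^{p}\,d\theta\right)^{-1}
\int_{-\pi}^{\pi}\left|\mathrm{Re}(z)\cos\theta+\mathrm{Im}(z)\sin\theta\right|^{p}\,d\theta,
\end{equation*}
which expresses $\|f\|_{p}^{p}$ as an average over real-valued combinations and so preserves the constant, or more directly by running the integration-by-parts argument on complex $f$ itself using $\frac{d}{dr}|f(ry)|^{p}=p|f|^{p-2}\,\mathrm{Re}\bigl(f\,\overline{f'}\bigr)$ and $|\mathrm{Re}(f\overline{f'})|\leq |f|\,|f'|$ before H\"older. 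Your one-dimensional computation already does the latter verbatim once you insert this modulus-derivative identity, so the fix is a one-line adjustment rather than a new idea; but as literally stated your complex-valued case of \eqref{LpHardyinC} would not yield the sharp constant.
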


\begin{rem}\label{LpHardy}
Let us show that Part (ii) implies Part (i).
Since the right hand side of \eqref{LH2} is nonnegative it follows that
\begin{equation}\label{LpHardyeq}
\left\|\frac{f}{|x|}\right\|_{L^{p}(\mathbb{G})}\leq\frac{p}{Q-p}\left\|\mathcal{R} f\right\|_{L^{p}(\mathbb{G})},\quad 1<p<Q,
\end{equation}
for all real-valued $f\in C_{0}^{\infty}(\mathbb{G}\backslash\{0\}).$
Consequently, the same inequality follows for all complex-valued functions by using the identity (cf. Davies \cite[p. 176]{Davies-bk:Semigroups-1980})
\begin{equation}\label{EQ:Davies-rc}
\forall z\in\mathbb C:\;
|z|^{p}=\left(\int_{-\pi}^{\pi}|\cos\theta|^{p} d\theta\right)^{-1}
\int_{-\pi}^{\pi}\left| {\rm Re}(z)\cos\theta+{\rm Im}(z)\sin\theta\right|^{p}d\theta,
\end{equation}
which follows from the representation $z=r(\cos\phi+i\sin\phi)$ by some manipulations.

Thus, we obtain inequality \eqref{LpHardyinC}, also implying that the constant $\frac{p}{Q-p}$ is sharp. We now claim that this constant is attained only for $f=0$. Formula \eqref{EQ:Davies-rc} shows that this needs to be checked only for real-valued functions $f$. From the identity \eqref{LH2}, if its right hand side is zero, we must have $u=v$, that is,
$$
-\frac{p}{Q-p}\mathcal{R} f(x)=\frac{f(x)}{|x|}.
$$
But this means that ${\tt Euler}(f)=-\frac{Q-p}{p}f$. By Lemma \ref{L:Euler} it implies that $f$ is positively homogeneous of order $-\frac{Q-p}{p}$, i.e. there exists a function $h:\wp\to\mathbb C$ such that
\begin{equation}\label{EQ:homr1}
f(x)=|x|^{-\frac{Q-p}{p}}h\left(\frac{x}{|x|}\right),
\end{equation}
where $\wp$ is the sphere from \eqref{EQ:sphere}.
In particular this implies that $f$ can not be compactly supported unless it is zero.
\end{rem}

\begin{rem}\label{LpHardy2}
We also note that the statement of the theorem can be slightly extended in the following way.
We denote by $H^{1}_{\mathcal{R} }(\mathbb G)$ the space of all $f\in L^{2}(\mathbb G)$ such that $\mathcal{R} f \in L^{2}(\mathbb G)$. Then the statement of Theorem \ref{RemHardy} remains true for functions in $H^{1}_{\mathcal{R}}(\mathbb G)$. Indeed, the proof of \eqref{LpHardyinC} given below works equally well for such functions, and more general analysis of these issues will appear elsewhere. For the sharpness and the equality in \eqref{LpHardyinC}, having \eqref{EQ:homr1} also implies that
$\frac{f(x)}{|x|}=|x|^{-\frac{Q}{p}}h\left(\frac{x}{|x|}\right)$ is not in $L^{p}(\mathbb G)$ unless $h=0$ and $f=0$.
\end{rem}

Thus, Remark \ref{LpHardy} shows that Part (ii), namely the representation formula \eqref{LH2}, implies Part (i) of the theorem. So, we only need to prove Part (ii). However, we now also give an independent proof of \eqref{LpHardyinC} for complex-valued functions without relying on the formula \eqref{EQ:Davies-rc}, especially since this calculation will be also useful in proving Part (ii).

\begin{proof}[Proof of Theorem \ref{RemHardy}]

Introducing polar coordinates $(r,y)=(|x|, \frac{x}{\mid x\mid})\in (0,\infty)\times\wp$ on $\mathbb{G}$, where $\wp$ is the sphere in \eqref{EQ:sphere}, and using the integration formula \eqref{EQ:polar}
one calculates
$$
\int_{\mathbb{G}}
\frac{|f(x)|^{p}}
{|x|^{p}}dx
=\int_{0}^{\infty}\int_{\wp}
\frac{|f(ry)|^{p}}
{r^{p}}r^{Q-1}d\sigma(y)dr
$$
$$
=-\frac{p}{Q-p}\int_{0}^{\infty} r^{Q-p} \,{\rm Re}\int_{\wp}
|f(ry)|^{p-2} f(ry) \overline{\frac{df(ry)}{dr}}d\sigma(y)dr
$$
\begin{equation}\label{EQ:formula1}
=-\frac{p}{Q-p} {\rm Re}\int_{\mathbb{G}}
\frac{|f(x)|^{p-2}f(x)}{|x|^{p-1}}
\overline{\frac{d}{d|x|}f(x)}dx.
\end{equation}
From this using the H\"older inequality for $\frac{1}{q}+\frac{1}{p}=1$ we get
$$\int_{\mathbb{G}}
\frac{|f(x)|^{p}}
{|x|^{p}}dx
=-\frac{p}{Q-p} {\rm Re}\int_{\mathbb{G}}
\frac{|f(x)|^{p-2}f(x)}{|x|^{p-1}}
\overline{\frac{d}{d|x|}f(x)}dx
$$
$$
\leq \frac{p}{Q-p}\left(\int_{\mathbb{G}}
\left|\frac{|f(x)|^{p-2}f(x)}{|x|^{p-1}}\right|^{q}dx\right)^{\frac{1}{q}} \left(\int_{\mathbb{G}}
\left|\frac{d}{d|x|}f(x)\right|^{p}dx\right)^{\frac{1}{p}}
$$
$$
=\frac{p}{Q-p} \left(\int_{\mathbb{G}}
\frac{|f(x)|^{p}}{|x|^{p}}dx\right)^{1-\frac{1}{p}}
\left\|\mathcal{R}
f(x)\right\|_{L^{p}(\mathbb{G})}.
$$
This proves inequality \eqref{LpHardyinC} in Part (i).

\medskip
Let us now prove Part (ii). Using notations
$$u:=u(x)=-\frac{p}{Q-p}\mathcal{R} f,$$
and
$$v:=v(x)=\frac{f}{|x|},$$
formula \eqref{EQ:formula1}
can be restated as
\begin{equation}\label{vu2}
\|v\|_{L^{p}(\mathbb{G})}^{p}={\rm Re}\int_{\mathbb{G}}|v|^{p-2}v \overline{u} dx.
\end{equation}
In the case of a real-valued $f$ formula \eqref{EQ:formula1} becomes
$$
\int_{\mathbb{G}}
\frac{|f(x)|^{p}}
{|x|^{p}}dx
=-\frac{p}{Q-p} \int_{\mathbb{G}}
\frac{|f(x)|^{p-2}f(x)}{|x|^{p-1}}
\frac{df(x)}{d|x|}dx
$$
and \eqref{vu2} becomes
\begin{equation}\label{vu}
\|v\|_{L^{p}(\mathbb{G})}^{p}=\int_{\mathbb{G}}|v|^{p-2}v u dx.
\end{equation}
On the other hand, for any $L^{p}$-integrable real-valued functions $u$ and $v$, one has
\begin{multline*}
\|u\|_{L^{p}(\mathbb{G})}^{p}-\|v\|_{L^{p}(\mathbb{G})}^{p}
+p\int_{\mathbb{G}}(|v|^{p}-|v|^{p-2}v u)dx \\
=\int_{\mathbb{G}}(|u|^{p}+(p-1)|v|^{p}-p|v|^{p-2}vu)dx=p\int_{\mathbb{G}}I_{p}(v,u)|v-u|^{2}dx,
\end{multline*}
where
$$
I_{p}(v,u)=(p-1)\int_{0}^{1}|\xi v+(1-\xi)u|^{p-2}\xi d\xi.
$$
Combining this with \eqref{vu} we obtain
$$\left\|u\right\|^{p}_{L^{p}(\mathbb{G})}
-\left\|v\right\|^{p}_{L^{p}(\mathbb{G})}=p\int_{\mathbb{G}}I_{p}(v,u)|v-u|^{2}dx.
$$
The equality \eqref{LH2} is proved.

\medskip
Let us now prove Part (iii). If $p=2$, the identity \eqref{vu2} takes the form
\begin{equation}\label{vu3}
\|v\|_{L^{2}(\mathbb{G})}^{2}={\rm Re}\int_{\mathbb{G}}v \overline{u} dx.
\end{equation}
Then we have
\begin{multline*}
\|u\|_{L^{2}(\mathbb{G})}^{2}-\|v\|_{L^{2}(\mathbb{G})}^{2}=
\|u\|_{L^{2}(\mathbb{G})}^{2}-\|v\|_{L^{2}(\mathbb{G})}^{2}+
2\int_{\mathbb{G}} (|v|^{2}-{\rm Re}\, v \overline{u}) dx \\
=
\int_{\mathbb{G}} (|u|^{2}+|v|^{2}-2{\rm Re}\, v \overline{u}) dx
=\int_{\mathbb{G}} |u-v|^{2} dx,
\end{multline*}
which gives \eqref{EQ:expL2}.
\end{proof}

The inequality \eqref{LpHardyinC} implies the following
uncertainly principle:

\begin{cor}[Uncertainly principle on $\mathbb{G}$]\label{Luncertainty}
Let $\mathbb{G}$ be a homogeneous group of homogeneous dimension
 $Q\geq 2$ and let $|\cdot|$ be a homogeneous quasi-norm on $\mathbb{G}$. Let $1<p<Q$ and $\frac{1}{p}+\frac{1}{q}=1$.
Then for every complex-valued function $f\in C^{\infty}_{0}(\mathbb{G}\backslash\{0\})$ we have
\begin{equation}\label{UP1}
\left(\int_{\mathbb{G}}\left|\mathcal{R}
 f\right|^{p}dx\right)^{\frac{1}{p}}
 \left(\int_{\mathbb{G}}|x|^{q}
|f|^{q}dx\right)^{\frac{1}{q}}
\geq\frac{Q-p}{p}\int_{\mathbb{G}}
|f|^{2}dx.
\end{equation}
\end{cor}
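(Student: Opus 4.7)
The plan is to derive the uncertainty principle \eqref{UP1} as a direct consequence of the $L^{p}$-Hardy inequality \eqref{LpHardyinC} from Theorem \ref{RemHardy}(i), coupled with a single application of H\"older's inequality. The key observation is that the quadratic quantity $\int_{\mathbb{G}}|f|^{2}\,dx$ can be split as a product of two pieces, one of which matches the left-hand side of Hardy and the other of which matches the weighted $L^{q}$-norm appearing in \eqref{UP1}.

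Concretely, I would first rewrite the integrand by inserting the factor $|x|/|x|$, namely
\begin{equation*}
\int_{\mathbb{G}}|f(x)|^{2}\,dx=\int_{\mathbb{G}}\frac{|f(x)|}{|x|}\cdot|x||f(x)|\,dx,
\end{equation*}
and then apply H\"older's inequality with conjugate exponents $p$ and $q$, yielding
\begin{equation*}
\int_{\mathbb{G}}|f|^{2}\,dx\leq\left(\int_{\mathbb{G}}\frac{|f|^{p}}{|x|^{p}}\,dx\right)^{1/p}\left(\int_{\mathbb{G}}|x|^{q}|f|^{q}\,dx\right)^{1/q}.
\end{equation*}
Since $f\in C_{0}^{\infty}(\mathbb{G}\backslash\{0\})$ and $1<p<Q$, the $L^{p}$-Hardy inequality \eqref{LpHardyinC} applies directly to the first factor on the right, giving
\begin{equation*}
\left(\int_{\mathbb{G}}\frac{|f|^{p}}{|x|^{p}}\,dx\right)^{1/p}\leq\frac{p}{Q-p}\left(\int_{\mathbb{G}}\left|\mathcal{R} f\right|^{p}\,dx\right)^{1/p}.
\end{equation*}

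Combining the two displayed inequalities and multiplying through by $(Q-p)/p$ produces \eqref{UP1}. I do not expect any real obstacle here: the statement is a standard packaging of a Hardy-type bound with H\"older's inequality, so the entire argument amounts to a two-line computation once Theorem \ref{RemHardy}(i) is in hand. The only minor point to observe is that every integral in the argument is finite because $f\in C_{0}^{\infty}(\mathbb{G}\backslash\{0\})$, so support concerns at the origin (where $|x|^{-p}$ would otherwise create problems) do not arise.
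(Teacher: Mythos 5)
Your proposal is correct and is essentially identical to the paper's own proof: both arguments combine the $L^{p}$-Hardy inequality \eqref{LpHardyinC} with H\"older's inequality applied to the splitting $|f|^{2}=\frac{|f|}{|x|}\cdot|x||f|$, merely in the opposite order (the paper applies Hardy first and H\"older second). No gaps.
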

\begin{proof}
From the inequality \eqref{LpHardyeq} we get
$$
\left(\int_{\mathbb{G}}\left|\mathcal{R}
 f\right|^{p}dx\right)^{\frac{1}{p}}\left(\int_{\mathbb{G}}|x|^{q}
|f|^{q}dx\right)^{\frac{1}{q}}\geq$$
 $$\frac{Q-p}{p}\left(\int_{\mathbb{G}}
\frac{|f|^{p}}{|x|^{p}}\,dx\right)^{\frac{1}{p}}\left(\int_{\mathbb{G}}|x|^{q}
|f|^{q}dx\right)^{\frac{1}{q}}
\geq\frac{Q-p}{p}\int_{\mathbb{G}}
|f|^{2}dx,$$
where we have used the H\"older inequality in the last line.
This shows \eqref{UP1}.
\end{proof}
In the abelian case ${\mathbb G}=(\mathbb R^{n},+)$ with the standard Euclidean distance $|x|$, we have
$Q=n$, so that \eqref{UP1} with $p=q=2$ and $n\geq 3$ implies
the uncertainly principle
\begin{equation}\label{UPRn-r}
\int_{\mathbb R^{n}}\left|\frac{x}{|x|}\cdot\nabla u(x)\right|^{2}dx
\int_{\mathbb R^{n}} |x|^{2} |u(x)|^{2}dx
\geq\left(\frac{n-2}{2}\right)^{2}\left(\int_{\mathbb R^{n}}
 |u(x)|^{2} dx\right)^{2},
\end{equation}
which in turn implies
the classical
uncertainty principle for $\mathbb{G}\equiv\mathbb R^{n}$:
\begin{equation*}\label{UPRn}
\int_{\mathbb R^{n}}|\nabla u(x)|^{2}dx
\int_{\mathbb R^{n}} |x|^{2} |u(x)|^{2}dx
\geq\left(\frac{n-2}{2}\right)^{2}\left(\int_{\mathbb R^{n}}
 |u(x)|^{2} dx\right)^{2},\quad n\geq 3.
\end{equation*}

\section{Weighted Hardy inequalities}
\label{Sec4}

In this section we establish weighted Hardy inequalities on the homogeneous group
$\mathbb{G}$. This will be the consequence of the following exact formula which we believe to be new already in the setting of the Euclidean space.

\begin{thm}\label{aHardy}
Let $\mathbb{G}$ be a homogeneous group
of homogeneous dimension $Q\geq 3$ and let
$|\cdot|$ be a homogeneous quasi-norm on $\mathbb{G}$.
Then for every complex-valued function $f\in C^{\infty}_{0}(\mathbb{G}\backslash\{0\})$
we have
\begin{multline}\label{awH}
\left\|\frac{1}{|x|^{\alpha}}\mathcal{R} f\right\|^{2}_{L^{2}(\mathbb{G})}=
\left(\frac{Q-2}{2}-\alpha\right)^{2}
\left\|\frac{f}{|x|^{\alpha+1}}\right\|^{2}_{L^{2}(\mathbb{G})}
+\left\|\frac{1}{|x|^{\alpha}}\mathcal{R} f+\frac{Q-2-2\alpha}{2|x|^{\alpha+1}}f
\right\|^{2}_{L^{2}(\mathbb{G})}
\end{multline}
for all $\alpha\in\mathbb{R}.$
\end{thm}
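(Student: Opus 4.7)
The plan is to reduce the identity \eqref{awH} to a single integration-by-parts formula in the radial direction, using the polar decomposition \eqref{EQ:polar} and the fact from \eqref{dfdr} that $\mathcal{R}$ acts on $f(ry)$ as $\frac{d}{dr}$.

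First I would expand the last squared norm on the right-hand side of \eqref{awH} as $|a+b|^{2} = |a|^{2}+2\operatorname{Re}(\bar a b)+|b|^{2}$ with $a=\frac{1}{|x|^{\alpha}}\mathcal{R} f$ and $b=\frac{Q-2-2\alpha}{2|x|^{\alpha+1}}f$. The $|a|^{2}$ term cancels the left-hand side of \eqref{awH}, and the $|b|^{2}$ term combines with $\bigl(\tfrac{Q-2}{2}-\alpha\bigr)^{2}\bigl\|\tfrac{f}{|x|^{\alpha+1}}\bigr\|^{2}$ to give $2\bigl(\tfrac{Q-2-2\alpha}{2}\bigr)^{2}\bigl\|\tfrac{f}{|x|^{\alpha+1}}\bigr\|^{2}$. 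Hence the identity \eqref{awH} is equivalent to the single cross-term identity
\begin{equation}\label{EQ:plan-key}
\operatorname{Re}\int_{\mathbb{G}} \frac{\overline{f}\,\mathcal{R} f}{|x|^{2\alpha+1}}\,dx
 = -\,\frac{Q-2-2\alpha}{2}\int_{\mathbb{G}} \frac{|f|^{2}}{|x|^{2\alpha+2}}\,dx,
\end{equation}
and the case $Q-2-2\alpha=0$ is automatic because then \eqref{awH} becomes the tautology $\|a\|^{2}=\|a\|^{2}$.

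Next I would prove \eqref{EQ:plan-key} by introducing polar coordinates $(r,y)=\bigl(|x|,\tfrac{x}{|x|}\bigr)\in(0,\infty)\times\wp$. Applying \eqref{EQ:polar} and \eqref{dfdr}, the left-hand side becomes
\begin{equation*}
\operatorname{Re}\int_{0}^{\infty}\!\!\int_{\wp} r^{Q-2-2\alpha}\,\overline{f(ry)}\,\frac{d}{dr}f(ry)\,d\sigma(y)\,dr
= \frac{1}{2}\int_{0}^{\infty}\!\!\int_{\wp} r^{Q-2-2\alpha}\frac{d}{dr}|f(ry)|^{2}\,d\sigma(y)\,dr.
\end{equation*}
A one-dimensional integration by parts in $r$ then yields \eqref{EQ:plan-key}; the boundary terms vanish because $f\in C_{0}^{\infty}(\mathbb{G}\backslash\{0\})$ so $f(ry)$ is supported in a compact subinterval of $(0,\infty)$ for each $y$.

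I do not expect any serious obstacle here, since the assumption $f\in C_{0}^{\infty}(\mathbb{G}\backslash\{0\})$ makes all the manipulations rigorous: everything reduces to a standard one-dimensional integration by parts after using polar coordinates. The only mild point to watch is that $\mathcal{R}$ is complex-linear and commutes with complex conjugation (being a real differential operator in $r$), which is what justifies the step $\operatorname{Re}(\overline{f}\,\mathcal{R} f) = \tfrac{1}{2}\mathcal{R}|f|^{2}$ used above. Once \eqref{EQ:plan-key} is established, \eqref{awH} follows by reversing the algebraic expansion, and is valid for every $\alpha\in\mathbb{R}$ including the degenerate value $\alpha=\tfrac{Q-2}{2}$.
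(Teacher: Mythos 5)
Your proof is correct, and it takes a genuinely more direct route than the paper's. The paper first records the product rule $\frac{1}{|x|^{\alpha}}\mathcal{R} f=\mathcal{R}\frac{f}{|x|^{\alpha}}+\alpha\frac{f}{|x|^{\alpha+1}}$, expands $\bigl\|\mathcal{R}\frac{f}{|x|^{\alpha}}+\frac{\alpha f}{|x|^{\alpha+1}}\bigr\|_{L^{2}}^{2}$, invokes the unweighted identity \eqref{EQ:expL2} (Part (iii) of Theorem \ref{RemHardy}) with $f$ replaced by $f/|x|^{\alpha}$, and evaluates the remaining cross term $2\alpha\,{\rm Re}\int_{\mathbb{G}}\mathcal{R}\bigl(\frac{f}{|x|^{\alpha}}\bigr)\frac{\overline{f}}{|x|^{\alpha+1}}\,dx$ in polar coordinates; the constant then emerges from $\bigl(\frac{Q-2}{2}\bigr)^{2}-\alpha(Q-2)+\alpha^{2}=\bigl(\frac{Q-2}{2}-\alpha\bigr)^{2}$. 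You bypass the conjugation and the appeal to the earlier theorem entirely: you note that \eqref{awH} is algebraically equivalent to the single identity ${\rm Re}\int_{\mathbb{G}}\overline{f}\,\mathcal{R} f\,|x|^{-2\alpha-1}\,dx=-\frac{Q-2-2\alpha}{2}\int_{\mathbb{G}}|f|^{2}|x|^{-2\alpha-2}\,dx$, which is the same radial integration by parts that (with weight $r^{Q-2}$ in place of $r^{Q-2-2\alpha}$) underlies the paper's proof of \eqref{EQ:expL2} itself. Both arguments therefore rest on one and the same computation; yours is shorter and self-contained, and you correctly dispose of the degenerate value $Q-2-2\alpha=0$ as a tautology, whereas the paper's presentation exhibits the weighted identity as a conjugated descendant of the $\alpha=0$ case, which is the structural viewpoint it then iterates in Section \ref{SEC:ho}.
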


From this we can get different inequalities. For example, if $\alpha=1$, by simplifying its coefficient we get the equality
\begin{equation}\label{47-0}
\left\|\frac{1}{|x|}\mathcal{R} f\right\|^{2}_{L^{2}(\mathbb{G})}=\left(\frac{Q-4}{2}\right)^{2}\left\|\frac{f}{|x|^{2}}
\right\|^{2}_{L^{2}(\mathbb{G})}+\left\|\frac{1}{|x|}\mathcal{R} f+\frac{Q-4}{2|x|^{2}}f
\right\|^{2}_{L^{2}(\mathbb{G})}.
\end{equation}

Now by dropping the nonnegative last term in \eqref{awH} we immediately obtain:

\begin{cor}\label{waHardy}
Let $Q\geq 3$, $\alpha\in\mathbb R$, and $Q-2\alpha-2\neq0$.
Then for all complex-valued functions $f\in C^{\infty}_{0}(\mathbb{G}\backslash\{0\})$
we have
\begin{equation}\label{awHardyeq-g}
\left\|\frac{f}{|x|^{\alpha+1}}\right\|_{L^{2}(\mathbb{G})}\leq\frac{2}{|Q-2-2\alpha|}
\left\|\frac{1}{|x|^{\alpha}}\mathcal{R} f\right\|_{L^{2}(\mathbb{G})}.
\end{equation}
The constant in \eqref{awHardyeq-g} is sharp and it is attained if and only if $f=0$.
\end{cor}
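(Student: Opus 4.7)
The plan is to read the inequality off directly from the identity \eqref{awH} of Theorem \ref{aHardy}, and then to treat the equality case and the sharpness of the constant using the homogeneity characterisation from Lemma \ref{L:Euler}.

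First, under the assumption $Q-2-2\alpha\neq 0$, the right-hand side of \eqref{awH} contains the nonnegative summand
\[
\left\|\frac{1}{|x|^{\alpha}}\mathcal{R} f+\frac{Q-2-2\alpha}{2|x|^{\alpha+1}}f\right\|^{2}_{L^{2}(\mathbb{G})};
\]
discarding it and taking square roots immediately yields \eqref{awHardyeq-g} with the constant $\frac{2}{|Q-2-2\alpha|}$.

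For the equality case, I would argue that if equality in \eqref{awHardyeq-g} is attained for some $f\in C^{\infty}_{0}(\mathbb{G}\setminus\{0\})$, then the dropped term must vanish identically, so that
\[
\mathcal{R} f(x)=-\frac{Q-2-2\alpha}{2|x|}f(x).
\]
Multiplying by $|x|$, this rewrites as ${\tt Euler}(f)=\nu f$ with $\nu=-\frac{Q-2-2\alpha}{2}$, and Lemma \ref{L:Euler} then forces $f$ to be positively homogeneous of degree $\nu$, i.e.\ $f(x)=|x|^{\nu}h(x/|x|)$ for some $h:\wp\to\mathbb C$. Such a function on $\mathbb{G}\setminus\{0\}$ cannot have compact support unless $h\equiv 0$, whence $f\equiv 0$.

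For the sharpness of the constant, the plan is to produce a family of trial functions approximating the formal extremizer $|x|^{\nu}$. Concretely, I would take $f_{\varepsilon}(x)=\chi_{\varepsilon}(|x|)\,|x|^{\nu}$, where $\chi_{\varepsilon}\in C^{\infty}_{0}(0,\infty)$ is a smooth radial cut-off equal to $1$ on $[2\varepsilon,1/(2\varepsilon)]$ and supported in $[\varepsilon,1/\varepsilon]$, and then compute both sides of \eqref{awHardyeq-g} using the polar decomposition \eqref{EQ:polar}. Since $\int_{\mathbb G}|f_{\varepsilon}|^{2}|x|^{-2(\alpha+1)}\,dx$ reduces to a radial integral of $r^{-1}$ against $\chi_{\varepsilon}^{2}$, it diverges logarithmically as $\varepsilon\to 0$; the same is true of $\|\,|x|^{-\alpha}\mathcal{R} f_{\varepsilon}\|_{L^{2}(\mathbb G)}^{2}$, with leading coefficient $\nu^{2}$, while the contributions from $\chi'_{\varepsilon}$ remain $O(1)$ because they are supported in two fixed logarithmic shells $[\varepsilon,2\varepsilon]$ and $[1/(2\varepsilon),1/\varepsilon]$. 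Taking the ratio of the two sides and passing to the limit $\varepsilon\to 0$ gives exactly $\frac{2}{|Q-2-2\alpha|}$, proving that this constant cannot be improved. The main technical point of the argument is this last step: isolating the logarithmically divergent main terms from the bounded cut-off corrections so that the limiting ratio is the claimed sharp constant.
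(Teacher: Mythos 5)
Your proposal is correct and follows essentially the same route as the paper: the inequality is obtained by dropping the nonnegative last term in \eqref{awH}, and the equality case is handled exactly as in Remark \ref{LpHardy}, via ${\tt Euler}(f)=-\frac{Q-2-2\alpha}{2}f$, Lemma \ref{L:Euler}, and the incompatibility of positive homogeneity with compact support (equivalently, $\frac{f}{|x|^{\alpha+1}}=|x|^{-Q/2}h(x/|x|)\notin L^{2}$ unless $h=0$). The only difference is that your sharpness step spells out the cut-off approximation of the homogeneous quasi-extremizer and the logarithmic divergence computation, which the paper leaves implicit by simply citing the argument of Remark \ref{LpHardy}; your explicit version is a legitimate and complete way to fill in that detail.
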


The last statement on the constant and the equality follows by the same argument as that in Remark \ref{LpHardy}. We note a special case of $\alpha=1$ again, then \eqref{47-0}
implies the estimate
\begin{equation}\label{awHardyeq}
\left\|\frac{f}{|x|^{2}}
\right\|_{L^{2}(\mathbb{G})}\leq
\frac{2}{Q-4}\left\|\frac{1}{|x|}\mathcal{R} f\right\|_{L^{2}(\mathbb{G})}, \quad Q\geq 5,
\end{equation}
again with $\frac{2}{Q-4}$ being the best constant.


If $\alpha=0$, the identity \eqref{awH} recovers Part (iii) of Theorem \ref{RemHardy}. However, we will use
Part (iii) of Theorem \ref{RemHardy} in the proof of Theorem \ref{aHardy}.

\begin{proof}[Proof of Theorem \ref{aHardy}]
First we note the equality, for $\alpha\in\mathbb{R}$,
\begin{equation}\label{EQ:eqE}
\frac{1}{|x|^{\alpha}}\mathcal{R} f=\mathcal{R} \frac{f}{|x|^{\alpha}}
+\alpha \frac{f}{|x|^{\alpha+1}}.
\end{equation}
Indeed, this follows from
$$
\mathcal{R} \frac{f}{|x|^{\alpha}}=\mathcal{R} \left(\frac{f}{|x|^{\alpha}}\right)
=\frac{1}{|x|^{\alpha}}\mathcal{R} f+f \mathcal{R}  \frac{1}{|x|^{\alpha}}
$$
and using \eqref{dfdr},
$$
\mathcal{R}  \frac{1}{|x|^{\alpha}}=\frac{d}{dr}\frac{1}{r^{\alpha}}=-\alpha\frac{1}{r^{\alpha+1}}=
-\alpha\frac{1}{|x|^{\alpha+1}},\quad r=|x|.
$$
Then we can write
\begin{multline}
\left\|\frac{1}{|x|^{\alpha}}\mathcal{R} f\right\|^{2}_{L^{2}(\mathbb{G})}=\left\|\mathcal{R} \frac{f}{|x|^{\alpha}}
+\frac{\alpha f}{|x|^{\alpha+1}}
\right\|^{2}_{L^{2}(\mathbb{G})}\\
=\left\|\mathcal{R} \frac{f}{|x|^{\alpha}}
\right\|^{2}_{L^{2}(\mathbb{G})}+2\alpha{\rm Re}\int_{\mathbb{G}}
\mathcal{R} \left(\frac{f}{|x|^{\alpha}}\right)
\frac{\overline{f}}{|x|^{\alpha+1}}dx+\left\|\frac{\alpha f}{|x|^{\alpha+1}}
\right\|^{2}_{L^{2}(\mathbb{G})}.
\end{multline}
By using \eqref{EQ:expL2} with $f$ replaced by $\frac{f}{|x|^{\alpha}}$, we have
using \eqref{EQ:eqE} that
\begin{equation}
\left\|\mathcal{R} \frac{f}{|x|^{\alpha}}
\right\|^{2}_{L^{2}(\mathbb{G})}=\left(\frac{Q-2}{2}\right)^{2}\left\|\frac{f}{|x|^{1+\alpha}}
\right\|^{2}_{L^{2}(\mathbb{G})}+\left\|\frac{1}{|x|^{\alpha}}\mathcal{R} f+\frac{Q-2-2\alpha}{2|x|^{\alpha+1}}f
\right\|^{2}_{L^{2}(\mathbb{G})}.
\end{equation}
Introducing polar coordinates $(r,y)=(|x|, \frac{x}{\mid x\mid})\in (0,\infty)\times\wp$ on $\mathbb{G}$ and using formula \eqref{EQ:polar} for polar coordinates, we have
\begin{multline}
2\alpha{\rm Re}\int_{\mathbb{G}}
\mathcal{R} \left(\frac{f}{|x|^{\alpha}}\right)
\frac{\overline{f}}{|x|^{\alpha+1}}dx=2 \alpha{\rm Re}\int_{0}^{\infty}r^{Q-2}\int_{\wp}
\frac{d}{dr}\left(\frac{f(ry)}
{r^{\alpha}}\right)\frac{\overline{f(ry)}}{r^{\alpha}}d\sigma(y)dr
\\
=\alpha\int_{0}^{\infty}r^{Q-2}\int_{\wp}
\frac{d}{dr}\left(\frac{|f(ry)|^{2}}
{r^{2\alpha}}\right)d\sigma(y)dr
=-\alpha(Q-2)\left\|\frac{f}{|x|^{\alpha+1}}
\right\|^{2}_{L^{2}(\mathbb{G})}.
\end{multline}
Summing up all above we obtain
\begin{multline*}
\left\|\frac{1}{|x|^{\alpha}}\mathcal{R} f\right\|^{2}_{L^{2}(\mathbb{G})}=
\left(\frac{Q-2}{2}-\alpha\right)^{2}\left\|\frac{f}{|x|^{\alpha+1}}
\right\|^{2}_{L^{2}(\mathbb{G})}+\left\|\frac{1}{|x|^{\alpha}}\mathcal{R} f+\frac{Q-2-2\alpha}{2|x|^{\alpha+1}}f
\right\|^{2}_{L^{2}(\mathbb{G})},
\end{multline*}
yielding \eqref{awH}.
\end{proof}

\section{Rellich inequality}
\label{Sec5}

In this section we prove the Rellich type inequality \eqref{iwHardyeq-R}. It will follow from the following identity:

\begin{thm}\label{wHardy}
Let $\mathbb{G}$ be a homogeneous group
of homogeneous dimension $Q\geq 5.$
Let $|\cdot|$ be a homogeneous quasi-norm on $\mathbb{G}$.
Then for every complex-valued function $f\in C^{\infty}_{0}(\mathbb{G}\backslash\{0\})$ we have
\begin{multline}\label{wH}
\left\|\mathcal{R} ^{2} f+\frac{Q-1}{|x|}\mathcal{R}  f
+\frac{Q(Q-4)}{4|x|^{2}}f\right\|^{2}_{L^{2}(\mathbb{G})}
+\frac{Q(Q-4)}{2}\left\|\frac{1}{|x|}\mathcal{R}  f
+\frac{Q-4}{2|x|^{2}}f\right\|^{2}_{L^{2}(\mathbb{G})}\\
=\left\|\mathcal{R} ^{2} f+\frac{Q-1}{|x|}\mathcal{R}  f
\right\|^{2}_{L^{2}(\mathbb{G})}-\left(\frac{Q(Q-4)}{4}\right)^{2}\left\|\frac{f}{|x|^{2}}
\right\|^{2}_{L^{2}(\mathbb{G})}.
\end{multline}
\end{thm}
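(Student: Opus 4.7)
The plan is to reduce the identity to two elementary integration-by-parts calculations in the radial variable. Writing $w := \mathcal{R}^{2} f + \frac{Q-1}{|x|}\mathcal{R} f$, $u := \frac{1}{|x|}\mathcal{R} f$, $v := \frac{f}{|x|^{2}}$, and $c := \frac{Q(Q-4)}{4}$, the claim takes the compact form
\begin{equation*}
\|w + cv\|_{L^{2}(\mathbb{G})}^{2} + 2c\,\|u + \tfrac{Q-4}{2}v\|_{L^{2}(\mathbb{G})}^{2} = \|w\|_{L^{2}(\mathbb{G})}^{2} - c^{2}\|v\|_{L^{2}(\mathbb{G})}^{2}.
\end{equation*}
First I would expand both squared norms on the left via the polarisation identity $\|a+b\|^{2} = \|a\|^{2} + 2\,\mathrm{Re}\langle a,b\rangle + \|b\|^{2}$, cancel the $\|w\|^{2}$ terms, move the $-c^{2}\|v\|^{2}$ across, divide through by $2c\neq 0$ (valid since $Q\geq 5$), and use the arithmetic simplification $\tfrac{Q(Q-4)+(Q-4)^{2}}{4} = \tfrac{(Q-4)(Q-2)}{2}$. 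This reduces the whole statement to the single scalar identity
\begin{equation*}
\mathrm{Re}\langle w, v\rangle + \|u\|_{L^{2}(\mathbb{G})}^{2} + (Q-4)\,\mathrm{Re}\langle u, v\rangle + \tfrac{(Q-4)(Q-2)}{2}\|v\|_{L^{2}(\mathbb{G})}^{2} = 0. \quad (\ast)
\end{equation*}

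To establish $(\ast)$ I would pass to polar coordinates via \eqref{EQ:polar}. For each fixed $y\in\wp$ put $h(r):=f(ry)$; then \eqref{dfdr} gives $\mathcal{R}f(ry)=h'(r)$ and $\mathcal{R}^{2}f(ry)=h''(r)$, and $h$ has compact support in $(0,\infty)$ because $f\in C_{0}^{\infty}(\mathbb{G}\setminus\{0\})$, so every boundary term in the one-dimensional integrations by parts below vanishes. The first required scalar identity is $\mathrm{Re}\langle u, v\rangle = -\tfrac{Q-4}{2}\|v\|^{2}$, obtained by writing the integrand, after \eqref{EQ:polar}, as $\tfrac{1}{2}(|h|^{2})'\,r^{Q-4}$ and integrating by parts once in $r$. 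The second is $\mathrm{Re}\langle w, v\rangle = -\|u\|^{2} + 2\,\mathrm{Re}\langle u, v\rangle$, obtained from the key observation that $h''(r)+\tfrac{Q-1}{r}h'(r)=\tfrac{1}{r^{Q-1}}(r^{Q-1}h'(r))'$, which combines with the polar measure $r^{Q-1}dr$ to cancel the $r^{-(Q-1)}$ factor; a single integration by parts then leaves exactly $-|h'(r)|^{2}r^{Q-3} + 2\,h'(r)\overline{h(r)}\,r^{Q-4}$, whose integrals over $\wp\times(0,\infty)$ reassemble into $-\|u\|^{2}$ and $2\,\mathrm{Re}\langle u,v\rangle$ respectively.

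Substituting the first lemma into the second gives $\mathrm{Re}\langle w,v\rangle = -\|u\|^{2} - (Q-4)\|v\|^{2}$, and plugging both values into $(\ast)$ collapses the $\|u\|^{2}$ terms and leaves $(Q-4)\|v\|^{2}\bigl[-1 - \tfrac{Q-4}{2} + \tfrac{Q-2}{2}\bigr] = 0$, finishing the argument. The main obstacle here is not conceptual but purely bookkeeping: one has to carry the coefficients $Q-1$, $Q-4$, $Q-2$, and $c=\frac{Q(Q-4)}{4}$ through the polarisation expansion and verify that they cancel in exactly the right way. The genuinely substantive step is the second scalar identity, whose point is that $\mathcal{R}^{2} + \frac{Q-1}{|x|}\mathcal{R}$ is precisely the radial part of a divergence-form operator on $\mathbb{G}$ expressed in polar coordinates, so a single integration by parts suffices in place of the two that a naive expansion would require.
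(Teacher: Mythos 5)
Your proposal is correct: I checked the polarisation expansion reducing \eqref{wH} to your scalar identity $(\ast)$, the two radial integration-by-parts lemmas $\mathrm{Re}\langle u,v\rangle=-\tfrac{Q-4}{2}\|v\|^{2}$ and $\mathrm{Re}\langle w,v\rangle=-\|u\|^{2}+2\,\mathrm{Re}\langle u,v\rangle$, and the final cancellation $-1-\tfrac{Q-4}{2}+\tfrac{Q-2}{2}=0$; all the arithmetic goes through, and the boundary terms vanish for the reason you give. The route is organised quite differently from the paper's. The paper works \emph{forward}: it starts from $\|f/|x|^{2}\|_{L^{2}}^{2}$, integrates by parts twice in $r$ to express it through $\|\tfrac{1}{|x|}\mathcal{R}f\|^{2}$ and $\mathrm{Re}\int \tfrac{f}{|x|^{2}}\overline{\mathcal{R}^{2}f}\,dx$, substitutes the weighted Hardy identity \eqref{awH} with $\alpha=1$ (i.e.\ \eqref{47-0}) for the first of these, massages the second, and only at the end applies polarisation to reassemble the squares. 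You work \emph{backward}: polarisation first, reducing everything to one scalar identity verified by two elementary computations, with no appeal to Theorem \ref{aHardy}. Your argument is therefore self-contained and somewhat shorter; its one genuinely clarifying ingredient, absent from the paper, is the divergence-form identity $h''+\tfrac{Q-1}{r}h'=r^{1-Q}(r^{Q-1}h')'$, which explains structurally why the operator $\mathcal{R}^{2}+\tfrac{Q-1}{|x|}\mathcal{R}$ is the right second-order object here and collapses the paper's two successive integrations by parts into one. What the paper's arrangement buys in exchange is that the intermediate identity \eqref{47-0} is reused elsewhere (notably in the iteration scheme of Section \ref{SEC:ho}), so deriving \eqref{wH} from it keeps the weighted Hardy identity as the single computational primitive of the whole paper.
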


Since the left hand side of \eqref{wH} is nonnegative we obtain the following Rellich type inequality for $Q\geq 5$:

\begin{cor}\label{weighHardy}
For all complex-valued functions $f\in C^{\infty}_{0}(\mathbb{G}\backslash\{0\})$ we have
\begin{equation}\label{wHardyeq}
\left\|\frac{f}{|x|^{2}}
\right\|_{L^{2}(\mathbb{G})}\leq\frac{4}{Q(Q-4)}\left\|\mathcal{R} ^{2} f+\frac{Q-1}{|x|}\mathcal{R}  f
\right\|_{L^{2}(\mathbb{G})},\quad Q\geq 5.
\end{equation}
The constant $\frac{4}{Q(Q-4)}$ is sharp and it is attained if and only if $f=0$.
\end{cor}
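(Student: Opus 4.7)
The proof rests entirely on the exact remainder identity \eqref{wH} of Theorem \ref{wHardy}, which I would use as a black box. Both terms on the left-hand side of \eqref{wH} are squared $L^{2}$-norms and hence nonnegative, so dropping them yields
$\left\|\mathcal{R}^{2}f+\tfrac{Q-1}{|x|}\mathcal{R} f\right\|_{L^{2}(\mathbb{G})}^{2} \geq \bigl(\tfrac{Q(Q-4)}{4}\bigr)^{2}\bigl\|\tfrac{f}{|x|^{2}}\bigr\|_{L^{2}(\mathbb{G})}^{2}$, which upon taking square roots is exactly the Rellich inequality \eqref{wHardyeq}. This part is essentially free.

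For the equality case, suppose $f\in C_{0}^{\infty}(\mathbb{G}\setminus\{0\})$ achieves equality in \eqref{wHardyeq}. Then both nonnegative terms on the left-hand side of \eqref{wH} must vanish simultaneously; in particular the second term forces $\tfrac{1}{|x|}\mathcal{R} f+\tfrac{Q-4}{2|x|^{2}}f\equiv 0$, equivalently $|x|\mathcal{R} f=-\tfrac{Q-4}{2}f$. By the definition \eqref{EQ:def-Euler} of the Euler operator this is ${\tt Euler}(f)=-\tfrac{Q-4}{2}f$, and Lemma \ref{L:Euler} then gives that $f$ is positively homogeneous of degree $-(Q-4)/2$ on $\mathbb{G}\setminus\{0\}$. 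Thus $f(x)=|x|^{-(Q-4)/2}h(x/|x|)$ for some continuous $h$ on the unit sphere $\wp$, and such an $f$ can lie in $C_{0}^{\infty}(\mathbb{G}\setminus\{0\})$ only if $h\equiv 0$, forcing $f\equiv 0$, exactly as in Remark \ref{LpHardy}.

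To show that the constant $\tfrac{4}{Q(Q-4)}$ cannot be improved, I would work with the formal extremiser $f_{0}(x)=|x|^{-(Q-4)/2}$, for which a direct computation via \eqref{dfdr} gives $\mathcal{R}^{2}f_{0}+\tfrac{Q-1}{|x|}\mathcal{R} f_{0}=-\tfrac{Q(Q-4)}{4}|x|^{-Q/2}$ and $f_{0}/|x|^{2}=|x|^{-Q/2}$, realising the extremal ratio $\tfrac{4}{Q(Q-4)}$ exactly. Since $f_{0}\notin C_{0}^{\infty}(\mathbb{G}\setminus\{0\})$, I would approximate it on the logarithmic scale by $f_{n}(x)=|x|^{-(Q-4)/2}\rho\bigl(\tfrac{1}{n}\log|x|\bigr)$ for a fixed bump function $\rho\in C_{0}^{\infty}(\mathbb{R})$. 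The change of variable $t=\log r$ combined with the polar decomposition \eqref{EQ:polar} reduces both sides of \eqref{wHardyeq} to one-dimensional integrals on $\mathbb{R}$ whose leading contributions both grow like $n$ with exact proportionality constant $\bigl(\tfrac{4}{Q(Q-4)}\bigr)^{2}$, while the corrections produced by $\rho'$ and $\rho''$ are only $O(1)$ and therefore subleading. The main technical step is this choice of cutoff: a naive linearly scaled truncation near $0$ and $\infty$ would generate error terms that dominate the logarithmically divergent bulk integral, whereas the logarithmic cutoff absorbs the scaling $|x|^{-(Q-4)/2}$ so that the errors remain of lower order and the ratio of the two norms converges cleanly to $\tfrac{4}{Q(Q-4)}$ as $n\to\infty$.
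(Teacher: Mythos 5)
Your proposal is correct and follows essentially the same route as the paper: the inequality is obtained by discarding the two nonnegative terms on the left of \eqref{wH}, non-attainment follows from Lemma \ref{L:Euler} forcing any extremiser to be positively homogeneous of degree $-(Q-4)/2$ (hence not compactly supported unless zero), and sharpness is shown by approximating that homogeneous quasi-extremiser. Your logarithmic-cutoff computation with $f_{n}(x)=|x|^{-(Q-4)/2}\rho\bigl(\tfrac{1}{n}\log|x|\bigr)$ is in fact a more explicit rendering of the paper's brief assertion that for such approximating sequences the left-hand side of \eqref{wH} becomes negligible relative to the two norms in \eqref{wHardyeq}.
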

Let us argue that the constant $\frac{4}{Q(Q-4)}$ is sharp and never attained unless $f=0$.
If the equality in \eqref{wHardyeq} is attained, it follows that both terms on the left hand side of
\eqref{wH} are zero. In particular, it means that
\begin{equation}\label{EQ:aux1}
\frac{1}{|x|}\mathcal{R}  f
+\frac{Q-4}{2|x|^{2}}f=0
\end{equation}
and hence ${\tt Euler}(f)=-\frac{Q-4}{2}f$. In view of Lemma \ref{L:Euler} the function $f$ must be positively homogeneous of order $-\frac{Q-4}{2}$ which is impossible, so that the constant is not attained unless $f=0$.

Furthermore, the first term in \eqref{wH} must be also zero,
and using \eqref{EQ:aux1} this is equivalent to
$$
\mathcal{R} ^{2}f+\frac{Q-2}{2|x|} \mathcal{R} f=0
$$
which means that $\mathcal{R} f$ is positively homogeneous of order $-\frac{Q-2}{2}.$ Thus, taking an approximation of homogeneous functions $f$ of oder $-\frac{Q-4}{2}$, we have that $\mathcal{R} f$ is homogeneous of order $-\frac{Q-2}{2}$, so that the left hand side of \eqref{wH} converges to zero. Therefore, the constant $\frac{4}{Q(Q-4)}$ in \eqref{wHardyeq} is sharp.

Thus, it remains to prove \eqref{wH}.

\begin{proof}[Proof of Theorem \ref{wHardy}]
As before, introducing polar coordinates $(r,y)=(|x|, \frac{x}{\mid x\mid})\in (0,\infty)\times\wp$ on $\mathbb{G}$, using the polar decomposition formula \eqref{EQ:polar} as well as integrating by parts we obtain
\begin{multline}\label{wH4}
\int_{\mathbb{G}}
\frac{|f(x)|^{2}}
{|x|^{4}}dx
=\int_{0}^{\infty}\int_{\wp}
\frac{|f(ry)|^{2}}
{r^{4}}r^{Q-1}d\sigma(y)dr
\\
=-\frac{2}{Q-4} {\rm Re} \int_{0}^{\infty} r^{Q-4} \int_{\wp}
f(ry) \overline{\frac{df(ry)}{dr}}d\sigma(y)dr
\\
=\frac{2}{(Q-3)(Q-4)} {\rm Re} \int_{0}^{\infty} r^{Q-3} \int_{\wp}
\left( \left|\frac{df(ry)}{dr}\right|^{2}+f(ry) \overline{\frac{d^{2}f(ry)}{dr^{2}}}\right) d\sigma(y)dr
\\
=\frac{2}{(Q-3)(Q-4)}\left(\left\|\frac{1}{|x|}\mathcal{R} f\right\|^{2}_{L^{2}(\mathbb{G})}+{\rm Re}\int_{\mathbb{G}}
\frac{f(x)}{|x|^{2}}
\overline{\mathcal{R} ^{2}f(x)}dx\right).
\end{multline}
For the first term, using \eqref{awH} with $\alpha=1$, we have \eqref{47-0}, i.e.
\begin{equation}\label{47}
\left\|\frac{1}{|x|}\mathcal{R} f\right\|^{2}_{L^{2}(\mathbb{G})}=\left(\frac{Q-4}{2}\right)^{2}\left\|\frac{f}{|x|^{2}}
\right\|^{2}_{L^{2}(\mathbb{G})}+\left\|\frac{1}{|x|}\mathcal{R} f+\frac{Q-4}{2|x|^{2}}f
\right\|^{2}_{L^{2}(\mathbb{G})}.
\end{equation}
For the second term a direct calculation shows
\begin{multline}\label{48}
{\rm Re}\int_{\mathbb{G}}
\frac{f(x)}{|x|^{2}}
\overline{\mathcal{R} ^{2}f(x)}dx
\\
=
{\rm Re}\int_{\mathbb{G}}
\frac{f(x)}{|x|^{2}}
\overline{\left(\mathcal{R} ^{2}f(x)+\frac{Q-1}{|x|}\mathcal{R}  f(x)\right)}dx-
(Q-1){\rm Re}\int_{\mathbb{G}}
\frac{f(x)}{|x|^{3}}
\overline{\mathcal{R}  f(x)}dx
\\
=
{\rm Re}\int_{\mathbb{G}}
\frac{f(x)}{|x|^{2}}
\overline{\left(\mathcal{R} ^{2}f(x)+\frac{Q-1}{|x|}\mathcal{R}  f(x)\right)}dx-
\frac{Q-1}{2}\int_{0}^{\infty}r^{Q-4}\int_{\wp}
\frac{d|f(ry)|^{2}}{dr}d\sigma(y)dr
\\
={\rm Re}\int_{\mathbb{G}}
\frac{f(x)}{|x|^{2}}
\overline{\left(\mathcal{R} ^{2}f(x)+\frac{Q-1}{|x|}\mathcal{R}  f(x)\right)}dx\\
+
\frac{(Q-1)(Q-4)}{2}\int_{0}^{\infty}r^{Q-5}\int_{\wp}
|f(ry)|^{2}d\sigma(y)dr
\\
={\rm Re}\int_{\mathbb{G}}
\frac{f(x)}{|x|^{2}}
\overline{\left(\mathcal{R} ^{2}f(x)+\frac{Q-1}{|x|}\mathcal{R}  f(x)\right)}dx+
\frac{(Q-1)(Q-4)}{2}\left\|\frac{f}{|x|^{2}}
\right\|^{2}_{L^{2}(\mathbb{G})}.
\end{multline}
Combining \eqref{47} and \eqref{48} with \eqref{wH4} we arrive at
\begin{multline*}
\left\|\frac{f}{|x|^{2}}
\right\|^{2}_{L^{2}(\mathbb{G})}=\frac{2}{(Q-3)(Q-4)}
\bigg(\left(\frac{Q-4}{2}\right)^{2}\left\|\frac{f}{|x|^{2}}
\right\|^{2}_{L^{2}(\mathbb{G})}+\left\|\frac{1}{|x|}\mathcal{R} f+\frac{Q-4}{2|x|^{2}}f
\right\|^{2}_{L^{2}(\mathbb{G})}
\\
+{\rm Re}\int_{\mathbb{G}}
\frac{f(x)}{|x|^{2}}
\overline{\left(\mathcal{R} ^{2}f(x)+\frac{Q-1}{|x|}\mathcal{R}  f(x)\right)}dx+
\frac{(Q-1)(Q-4)}{2}\left\|\frac{f}{|x|^{2}}
\right\|^{2}_{L^{2}(\mathbb{G})}\bigg).
\end{multline*}
Collecting same terms, this gives
\begin{multline*}
0=\frac{2Q}{(Q-3)4}\left\|\frac{f}{|x|^{2}}
\right\|^{2}_{L^{2}(\mathbb{G})}+\frac{2}{(Q-3)(Q-4)}\bigg({\rm Re}\int_{\mathbb{G}}
\frac{f(x)}{|x|^{2}}\overline{\left(\mathcal{R} ^{2}f(x)+\frac{Q-1}{|x|}\mathcal{R}  f(x)\right)}dx
\\+\left\|\frac{1}{|x|}\mathcal{R}  f+\frac{Q-4}{2|x|^{2}}f
\right\|^{2}_{L^{2}(\mathbb{G})}\bigg),
\end{multline*}
that is,
\begin{multline*}
\frac{Q(Q-4)}{4}\left\|\frac{f}{|x|^{2}}
\right\|^{2}_{L^{2}(\mathbb{G})}= \\
-{\rm Re}\int_{\mathbb{G}}
\frac{f(x)}{|x|^{2}}\overline{\left(\mathcal{R} ^{2}f(x)+\frac{Q-1}{|x|}\mathcal{R}  f(x)\right)}dx
-\left\|\frac{1}{|x|}\mathcal{R} f+\frac{Q-4}{2|x|^{2}}f
\right\|^{2}_{L^{2}(\mathbb{G})}.
\end{multline*}
Multiplying both sides by $\frac{4}{Q(Q-4)}$ and simplifying
we obtain
\begin{multline}\label{16a}
{\rm Re}\int_{\mathbb{G}}
\frac{f(x)}{|x|^{2}}\left(\overline{\frac{f(x)}{|x|^{2}}+\frac{4}{Q(Q-4)}\left(\mathcal{R} ^{2}f(x)+\frac{Q-1}{|x|}\mathcal{R}  f(x)\right)}\right)dx \\
=
-\frac{4}{Q(Q-4)}\left\|\frac{1}{|x|}\mathcal{R} f+\frac{Q-4}{2|x|^{2}}f
\right\|^{2}_{L^{2}(\mathbb{G})}.
\end{multline}
On the other hand,
\begin{multline}\label{16b}
2{\rm Re}\int_{\mathbb{G}}
\frac{f(x)}{|x|^{2}}\left(\overline{\frac{f(x)}{|x|^{2}}+\frac{4}{Q(Q-4)}\left(\mathcal{R} ^{2}f(x)+\frac{Q-1}{|x|}\mathcal{R}  f(x)\right)}\right)dx \\
=\left\|\frac{f(x)}{|x|^{2}}+\frac{4}{Q(Q-4)}\left(\mathcal{R} ^{2}f(x)+\frac{Q-1}{|x|}\mathcal{R}  f(x)\right)
\right\|^{2}_{L^{2}(\mathbb{G})}+\left\|\frac{f(x)}{|x|^{2}}
\right\|^{2}_{L^{2}(\mathbb{G})}\\-\left\|\frac{4}{Q(Q-4)}\left(\mathcal{R} ^{2}f(x)+\frac{Q-1}{|x|}\mathcal{R}  f(x)\right)
\right\|^{2}_{L^{2}(\mathbb{G})}.
\end{multline}
From \eqref{16a} and \eqref{16b} we obtain
\begin{multline*}
-\frac{8}{Q(Q-4)}\left\|\frac{1}{|x|}\mathcal{R}  f+\frac{Q-4}{2|x|^{2}}f
\right\|^{2}_{L^{2}(\mathbb{G})}\\=\left(\frac{4}{Q(Q-4)}\right)^{2}\left\|
\frac{Q(Q-4)}{4}\frac{f(x)}{|x|^{2}}+\mathcal{R} ^{2}f(x)+\frac{Q-1}{|x|}\mathcal{R}  f(x)
\right\|^{2}_{L^{2}(\mathbb{G})}+\left\|\frac{f(x)}{|x|^{2}}
\right\|^{2}_{L^{2}(\mathbb{G})}\\
-\left(\frac{4}{Q(Q-4)}\right)^{2}\left\|\mathcal{R} ^{2}f(x)+\frac{Q-1}{|x|}\mathcal{R}  f(x)
\right\|^{2}_{L^{2}(\mathbb{G})},
\end{multline*}
thus,
\begin{multline*}
\left\|\mathcal{R} ^{2} f+\frac{Q-1}{|x|}\mathcal{R}  f
+\frac{Q(Q-4)}{4|x|^{2}}f\right\|^{2}_{L^{2}(\mathbb{G})}
+\frac{Q(Q-4)}{2}\left\|\frac{1}{|x|}\mathcal{R}  f
+\frac{Q-4}{2|x|^{2}}f\right\|^{2}_{L^{2}(\mathbb{G})}\\
=\left\|\mathcal{R} ^{2} f+\frac{Q-1}{|x|}\mathcal{R}  f
\right\|^{2}_{L^{2}(\mathbb{G})}-\left(\frac{Q(Q-4)}{4}\right)^{2}\left\|\frac{f}{|x|^{2}}
\right\|^{2}_{L^{2}(\mathbb{G})}.
\end{multline*}
The equality \eqref{wH} is proved.
\end{proof}

\section{Higher order Hardy-Rellich inequalities}
\label{SEC:ho}

In this section we show that by iterating the established weighted Hardy inequalities we get inequalities of higher order. An interesting feature is that we also obtain the exact formula for the remainder which yields the sharpness of the constants as well.

\begin{thm}\label{H-high}
Let $Q\geq 3$, $\alpha\in\mathbb{R}$ and $k\in\mathbb N$ such that $\prod_{j=0}^{k-1}
\left|\frac{Q-2}{2}-(\alpha+j)\right|\neq0$.
Then for all complex-valued functions $f\in C^{\infty}_{0}(\mathbb{G}\backslash\{0\})$
we have
\begin{equation}\label{EQ:high-order1}
\left\|\frac{f}{|x|^{k+\alpha}}
\right\|_{L^{2}(\mathbb{G})}\leq
\left[\prod_{j=0}^{k-1}
\left|\frac{Q-2}{2}-(\alpha+j)\right|\right]^{-1}\left\|\frac{1}{|x|^{\alpha}}\mathcal{R} ^{k}f\right\|_{L^{2}(\mathbb{G})},
\end{equation}
where the constant above is sharp, and is attained if and only if $f=0$.

Moreover, for all $k\in\mathbb N$ and $\alpha\in\mathbb{R}$, the following equality holds:
\begin{multline}\label{equality-high-rem}
\left\|\frac{1}{|x|^{\alpha}}\mathcal{R} ^{k}f\right\|^{2}_{L^{2}(\mathbb{G})}=
\left[\prod_{j=0}^{k-1}
\left(\frac{Q-2}{2}-(\alpha+j)\right)^{2}\right]\left\|\frac{f}{|x|^{k+\alpha}}
\right\|^{2}_{L^{2}(\mathbb{G})}
\\+\sum_{l=1}^{k-1}\left[\prod_{j=0}^{l-1}
\left(\frac{Q-2}{2}-(\alpha+j)\right)^{2}\right]\left\|\frac{1}{|x|^{l+\alpha}}\mathcal{R} ^{k-l}f+
\frac{Q-2(l+1+\alpha)}{2|x|^{l+1+\alpha}}\mathcal{R} ^{k-l-1}f\right\|^{2}_{L^{2}(\mathbb{G})}
\\
+\left\|\frac{1}{|x|^{\alpha}}\mathcal{R} ^{k}f+\frac{Q-2-2\alpha}{2|x|^{1+\alpha}}\mathcal{R} ^{k-1}f \right\|^{2}_{L^{2}(\mathbb{G})}.
\end{multline}
\end{thm}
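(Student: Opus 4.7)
The plan is to derive the remainder identity \eqref{equality-high-rem} by induction on $k$, using Theorem \ref{aHardy} both as the base case and as the engine of the inductive step, and then to deduce the inequality \eqref{EQ:high-order1} together with the sharpness claim. Throughout, for brevity write $c_j:=\frac{Q-2}{2}-(\alpha+j)$.

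For the base case $k=1$, the sum $\sum_{l=1}^{0}$ in \eqref{equality-high-rem} is empty and the identity reduces verbatim to \eqref{awH}, which is Theorem \ref{aHardy}. For the inductive step, assume \eqref{equality-high-rem} holds at level $k-1$ for every $\alpha\in\mathbb R$. First apply \eqref{awH} with $f$ replaced by $\mathcal{R}^{k-1}f\in C^{\infty}_{0}(\mathbb G\setminus\{0\})$ to peel off one radial derivative:
\begin{equation*}
\left\|\tfrac{1}{|x|^{\alpha}}\mathcal{R}^{k}f\right\|^{2}_{L^{2}(\mathbb G)}
= c_0^{\,2}\left\|\tfrac{\mathcal{R}^{k-1}f}{|x|^{\alpha+1}}\right\|^{2}_{L^{2}(\mathbb G)}
+\left\|\tfrac{1}{|x|^{\alpha}}\mathcal{R}^{k}f+\tfrac{Q-2-2\alpha}{2|x|^{\alpha+1}}\mathcal{R}^{k-1}f\right\|^{2}_{L^{2}(\mathbb G)}.
\end{equation*}
Then apply the induction hypothesis to the first term on the right with $k\mapsto k-1$ and $\alpha\mapsto\alpha+1$; the prefactors supplied by the hypothesis involve $c_{1+j}$ rather than $c_j$, so a reindexing $l\mapsto l+1$ in the resulting sum together with the identity $c_0^{\,2}\prod_{j=1}^{m}c_j^{\,2}=\prod_{j=0}^{m}c_j^{\,2}$ merges the two expansions into the desired level-$k$ formula \eqref{equality-high-rem}.

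Dropping the nonnegative sum and the final squared remainder in \eqref{equality-high-rem} and taking square roots immediately yields \eqref{EQ:high-order1}. For the characterisation of equality, suppose the equality in \eqref{EQ:high-order1} holds for some $f\in C_0^{\infty}(\mathbb G\setminus\{0\})$; then every squared remainder in \eqref{equality-high-rem} must vanish. In particular, the $l=k-1$ summand (whose prefactor $\prod_{j=0}^{k-2}c_j^{\,2}$ is nonzero by hypothesis) forces $\mathcal{R} f=-\tfrac{Q-2(k+\alpha)}{2|x|}f$, equivalently ${\tt Euler}(f)=-\tfrac{Q-2(k+\alpha)}{2}f$, so by Lemma \ref{L:Euler} the function $f$ is positively homogeneous of that order. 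A compactly supported smooth function on $\mathbb G\setminus\{0\}$ with this property must vanish identically. Sharpness of the constant then follows from the standard strategy of testing against smooth radial cutoffs of the critical profile $|x|^{-\frac{Q-2(k+\alpha)}{2}}h(x/|x|)$: a function exactly homogeneous of this order satisfies $\mathcal{R}^{k}f=(-1)^{k}\prod_{j=0}^{k-1}c_j\cdot|x|^{-k}f$ by iterating Lemma \ref{L:Euler}, so the ratio in \eqref{EQ:high-order1} equals $\prod_{j}|c_j|$ exactly; truncation by smooth cutoffs then makes the ratio arbitrarily close to $\prod_{j}|c_j|$, in direct analogy with Remark \ref{LpHardy}.

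The main technical obstacle is the combinatorial bookkeeping in the inductive step: one must verify that the prefactors $\prod_{j=0}^{l-1}c_j^{\,2}$ and the coefficients $\frac{Q-2(l+1+\alpha)}{2}$ inside each squared remainder, as they appear in \eqref{equality-high-rem}, align correctly with the objects produced by the shifted induction hypothesis at $\alpha+1$ (where they appear as $c_{1+j}$ and $\frac{Q-2-2(\alpha+1+l)}{2}$ before the $l\mapsto l+1$ reindexing). Beyond this bookkeeping the argument is routine.
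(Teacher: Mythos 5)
Your proof is correct and follows essentially the same route as the paper: both derive \eqref{equality-high-rem} by iterating the weighted identity \eqref{awH} (you merely formalise the paper's ``iteration process'' as an explicit induction on $k$ with the shift $\alpha\mapsto\alpha+1$), and both establish non-attainment and sharpness by the same homogeneity argument via Lemma \ref{L:Euler} together with approximation of the critical homogeneous profile. No gaps.
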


For $k=1$ \eqref{EQ:high-order1} gives weighted Hardy's inequality and for $k=1$ and $\alpha=0$ this gives ($L^{2}$) Hardy's inequality.
For $k=2$ this can be thought of as a (weighted) Hardy-Rellich type inequality, while for larger $k$ this
corresponds to higher order (weighted) Rellich inequalities.
%
%
Although iterative methods often do not yield best constants, since we have the formula \eqref{equality-high-rem} for the remainder, we can use it to show that the iterative constant is actually sharp.
This may be a general feature of iterating Hardy-Rellich type inequalities as the same phenomena was also observed in $\Rn$ by Davies and Hinz \cite{Davies-Hinz} although they have used very different methods for their analysis.


\begin{proof}[Proof of Theorem \ref{H-high}]
We can iterate \eqref{awH}, that is, for any $\alpha\in\mathbb{R}$ and $Q\geq 3$
we have
\begin{equation}\label{awH0}
\left\|\frac{1}{|x|^{\alpha}}\mathcal{R} f\right\|^{2}_{L^{2}(\mathbb{G})}=
\left(\frac{Q-2}{2}-\alpha\right)^{2}
\left\|\frac{f}{|x|^{\alpha+1}}\right\|^{2}_{L^{2}(\mathbb{G})}
+\left\|\frac{1}{|x|^{\alpha}}\mathcal{R} f+\frac{Q-2(\alpha+1)}{2|x|^{\alpha+1}}f
\right\|^{2}_{L^{2}(\mathbb{G})}.
\end{equation}
In \eqref{awH0} replacing $f$ by $\mathcal{R} f$ we obtain
\begin{equation}\label{awH0E}
\left\|\frac{1}{|x|^{\alpha}}\mathcal{R}^{2} f\right\|^{2}_{L^{2}(\mathbb{G})}=
\left(\frac{Q-2}{2}-\alpha\right)^{2}
\left\|\frac{\mathcal{R}f}{|x|^{\alpha+1}}\right\|^{2}_{L^{2}(\mathbb{G})}
+\left\|\frac{1}{|x|^{\alpha}}\mathcal{R}^{2} f+\frac{Q-2(\alpha+1)}{2|x|^{\alpha+1}}\mathcal{R}f
\right\|^{2}_{L^{2}(\mathbb{G})}.
\end{equation}
On the other hand, replacing $\alpha$ by $\alpha+1$, \eqref{awH0} gives
\begin{multline}\label{awH1}
\left\|\frac{1}{|x|^{\alpha+1}}\mathcal{R} f\right\|^{2}_{L^{2}(\mathbb{G})}=
\left(\frac{Q-2}{2}-(\alpha+1)\right)^{2}
\left\|\frac{f}{|x|^{\alpha+2}}\right\|^{2}_{L^{2}(\mathbb{G})}
\\+\left\|\frac{1}{|x|^{\alpha+1}}\mathcal{R} f+\frac{Q-2(\alpha+2)}{2|x|^{\alpha+2}}f
\right\|^{2}_{L^{2}(\mathbb{G})}.
\end{multline}
Combining this with \eqref{awH0E} we obtain
\begin{multline}
\left\|\frac{1}{|x|^{\alpha}}\mathcal{R} ^{2}f\right\|^{2}_{L^{2}(\mathbb{G})}=
\left(\frac{Q-2}{2}-\alpha\right)^{2}
\left(\frac{Q-2}{2}-(\alpha+1)\right)^{2}
\left\|\frac{f}{|x|^{\alpha+2}}\right\|^{2}_{L^{2}(\mathbb{G})}\\
+\left(\frac{Q-2}{2}-\alpha\right)^{2}\left\|\frac{1}{|x|^{\alpha+1}}\mathcal{R} f+\frac{Q-2(\alpha+2)}{2|x|^{\alpha+2}}f
\right\|^{2}_{L^{2}(\mathbb{G})}
+\left\|\frac{1}{|x|^{\alpha}}\mathcal{R} ^{2}f+\frac{Q-2-2\alpha}{2|x|^{\alpha+1}}\mathcal{R} f
\right\|^{2}_{L^{2}(\mathbb{G})}.
\end{multline}
This iteration process gives
\begin{multline}\label{equality-high}
\left\|\frac{1}{|x|^{\alpha}}\mathcal{R} ^{k}f\right\|^{2}_{L^{2}(\mathbb{G})}=
\left[\prod_{j=0}^{k-1}
\left(\frac{Q-2}{2}-(\alpha+j)\right)^{2}\right]\left\|\frac{f}{|x|^{k+\alpha}}
\right\|^{2}_{L^{2}(\mathbb{G})}
\\+\sum_{l=1}^{k-1}\left[\prod_{j=0}^{l-1}
\left(\frac{Q-2}{2}-(\alpha+j)\right)^{2}\right]\left\|\frac{1}{|x|^{l+\alpha}}\mathcal{R} ^{k-l}f+
\frac{Q-2(l+1+\alpha)}{2|x|^{l+1+\alpha}}\mathcal{R} ^{k-l-1}f\right\|^{2}_{L^{2}(\mathbb{G})}
\\
+\left\|\frac{1}{|x|^{\alpha}}\mathcal{R} ^{k}f+\frac{Q-2-2\alpha}{2|x|^{1+\alpha}}\mathcal{R} ^{k-1}f \right\|^{2}_{L^{2}(\mathbb{G})}
, \quad k=1,2,\ldots.
\end{multline}
By dropping positive terms, it follows that
\begin{equation}\label{awHardyeq-high}
\left\|\frac{1}{|x|^{\alpha}}\mathcal{R} ^{k}f\right\|^{2}_{L^{2}(\mathbb{G})}\geq
C_{k,Q}\left\|\frac{f}{|x|^{k+\alpha}}
\right\|^{2}_{L^{2}(\mathbb{G})},
\end{equation}
where
\begin{equation}\label{constant}
C_{k,Q}=\prod_{j=0}^{k-1}
\left(\frac{Q-2}{2}-(\alpha+j)\right)^{2}.
\end{equation}
If $C_{k,Q}\neq0$,
\begin{equation}\label{awHardyeq-high}
\left\|\frac{f}{|x|^{k+\alpha}}
\right\|^{2}_{L^{2}(\mathbb{G})}\leq
\frac{1}{C_{k,Q}}\left\|\frac{1}{|x|^{\alpha}}\mathcal{R} ^{k}f\right\|^{2}_{L^{2}(\mathbb{G})}.
\end{equation}
This proves inequality \eqref{EQ:high-order1}.
For the sharpness of the constant and the equality in \eqref{EQ:high-order1}
we argue in a way similar to the sharpness argument after Corollary \ref{weighHardy}.
Namely, the equality $$\frac{\mathcal{R} ^{k-l}f}{|x|^{l+\alpha}}+
\frac{Q-2(l+1+\alpha)}{2|x|^{l+1+\alpha}}\mathcal{R} ^{k-l-1}f=0$$
can be rewritten as $$|x|\mathcal{R} (\mathcal{R} ^{k-l-1}f)+
\frac{Q-2(l+1+\alpha)}{2}(\mathcal{R} ^{k-l-1}f)=0,$$ and
by Lemma \ref{L:Euler} this means that
$\mathcal{R} ^{k-l-1}f$ is positively homogeneous of degree $-\frac{Q}{2}+l+1+\alpha.$
So, all the remainder terms vanish if $f$ is positively homogeneous of degree
$k-\frac{Q}{2}+\alpha$. As this can be approximated by functions in
$C^{\infty}_{0}(\mathbb{G}\backslash\{0\})$, the constant $C_{k,Q}$ is sharp.
Even if it were attained, it would be on functions $f$ which are positively homogeneous of
degree $k-\frac{Q}{2}+\alpha$, in which case $\frac{f}{|x|^{k+\alpha}}$ would be positively homogeneous of
degree $-\frac{Q}{2}$. These are in $L^{2}$ if and only if they are zero.
\end{proof}


\end{document}